\newtheorem{Theorem}{Theorem}[section]
\newtheorem{Lemma}{Lemma}[section]
\newtheorem{Proposition}{Proposition}[section]
\theoremstyle{definition}
\newtheorem{Definition}{Definition}[section]
\theoremstyle{remark}
\newtheorem{Remark}{Remark}[section]
\numberwithin{equation}{section}
\renewcommand{\u}{{\bf u}}
\newcommand{\R}{{\mathbb R}}
\newcommand{\Dv}{{\rm div}}
\newcommand{\x}{{\bf x}}
\newcommand{\m}{{\bf m}}
\def\w{w}
\def\f{\frac}
\renewcommand{\O}{\Omega}
\def\D{\Delta }
\def\hf1{^\f{1}{1-\xi^2}}
\def\be{\begin{equation}}
\def\en{\end{equation}}
\def\bs{\begin{split}}
\def\es{\end{split}}
\renewcommand{\v}{{\bf v}}
\author{Dehua Wang}
\address{Department of Mathematics, University of Pittsburgh,
                           Pittsburgh, PA 15260.}
\email{dwang@math.pitt.edu}
\author{Cheng Yu}
\address{Department of Mathematics, University of Pittsburgh,
                           Pittsburgh, PA 15260.}
\email{chy39@pitt.edu}
\title%[Global weak solutions to N-S-V equations]
\keywords{Navier-Stokes-Vlasov equations, density-dependent,  fluid-particle, weak solutions}
\subjclass[2000]{35Q35, 76D05, 82C40, 35H10}
\date{\today}
\begin{document}

\begin{abstract}
 A fluid-particle system  of the incompressible inhomogeneous Navier-Stokes equations and Vlasov equation in the three dimensional space is considered in this paper. The coupling arises from the drag force in the fluid equations and the acceleration in the Vlasov equation. An initial-boundary value problem is studied in a bounded domain
with large data. The existence of global weak solutions is
established through an approximation scheme, energy estimates, and weak convergence.
\end{abstract}

\maketitle

\section{Introduction}
%A large variety models coupling between an equation of fluid mechanics and a kinetic equation were introduced by Williams \cite{W} in his framework of sprays. In such a system models, the fluid, on one hand, is described by macroscopic quantities: its density $\rho(t,x)\geq 0$ and its velocity $\u(t,x)$. Depending on the physical properties of the fluid, the evolution of those quantities is ruled by Navier-Stokes equations or Euler Equations (compressible or incompressible). Assuming that the fluid is viscous, incompressible density-dependent in this paper.

%On the other hand, the droplets are described by a distribution function $f=f(t,x,\v)\geq 0$ which at time $t$ and a point $x$ have  velocity $\v$. Depending on physical properties of the droplets, the evolution of $f$ is governed by a kinetic equation with or without Brownian motion of the droplets. In this paper, we neglect the Brownian motion of the droplets.

%In such a system models, the coupling comes from drag force in the fluid equation and the acceleration in the kinetic equation because a dense phase and a disperse phase interact strongly on each other.

We are concerned with the Navier-Stokes-Vlasov equations for particles dispersed in a density-dependent incompressible viscous fluid:
\begin{equation}\label{1}
\rho_{t}+\Dv{(\rho\u)}=0,
\end{equation}
\begin{equation}\label{2}
(\rho\u)_{t}+\Dv(\rho\u\otimes \u)+\nabla p-\mu\D\u=-\int_{\R^3} {m_{p}Ff}d\v,
\end{equation}
\begin{equation}\label{3}
\Dv \u=0,
\end{equation}
\begin{equation}\label{4}
f_{t}+\v\cdot\nabla_{x}f+\Dv_{\v}{(Ff)}=0,
\end{equation}
for $(x,\v,t)$ in $\O\times\R^3\times(0,\infty)$,
where $\O\subset\R^3$ is a bounded  domain,
$\rho$ is the density of the fluid,
$\u$ is the velocity of the fluid, $p$ is the pressure, $\mu$ is the kinematic viscosity of the fluid.
The density distribution function $f(t,x,\v)$ of particles
depends on the time $t\in[0,T]$, the physical position $x\in\O$ and the velocity of particle $\v\in\R^3$. %The number of particles enclosed at $t\geq 0$ and location $x\in\O$ in the volume element $d\v$ is given by $f(t,x,\v)\,d\v$,
In \eqref{2}, $m_{p}$ is the mass of the particle and $F$ is the drag force.
The interaction of the fluid and particles is through the
drag force exerted by the fluid onto the particles.  The drag force $F$
typically depends on the relative velocity $\u-\v$ and on the density of
fluid $\rho$ (e.g. \cite{MV2}),  such as
\begin{equation}
\label{5}
F=F_0\rho(\u-\v),
\end{equation}
where $F_0$ is a positive constant.
Without loss of generality we take $\mu=F_0=m_p=1$ throughout the paper.
The fluid-particle system \eqref{1}-\eqref{5} arises in many applications such as
sprays, aerosols, and more general  two phase flows where
one phase (disperse) can be considered as a suspension of particles onto the other
one (dense) regarded as a fluid.
This system \eqref{1}-\eqref{5} or its variants have been used in sedimentation
of solid grain by external forces, for fuel-droplets in combustion theory (such as in the study of engines),  chemical engineering,  bio-sprays in medicine, waste water treatment, and pollutants in the air.
We refer the readers to \cite{BD,CP,D,DR,DV,GHMZ,O,RM,W} for more physical background, applications and discussions of  the fluid-particle systems.

The aim of this paper is to establish the global existence of weak solutions to
the initial-boundary value problem for the system \eqref{1}-\eqref{5} subject to the following initial data:
\begin{equation}\label{6}
\rho|_{t=0}=\rho_{0}(\x)\ge 0,\;\;  \x \in  \O,
\end{equation}
\begin{equation}\label{7}
(\rho\u)|_{t=0}=\m_{0}(x),\;\; \x\in \O,
\end{equation}
\begin{equation}\label{8}
f|_{t=0}=f_{0}(x,\v),\;\; \x\in \O,\quad\v\in \R^{N},
\end{equation}
and the following boundary conditions:
\begin{equation}
\label{boundary condition}
\begin{split}
&\u(t,x)=0\;\;\text{ on } \partial\O,\\
&f(t,x,\v)=f(t,x,\v^{*})\;\; \text {for } x\in\partial\O,\;\v\cdot\nu(x)<0,
\end{split}
\end{equation}
where $$\v^{*}=\v-2(\v\cdot \nu(x))\nu(x)$$ is the specular velocity,
and $\nu(x)$ is the outward normal vector to $\O$. The same as in \cite{H}, we assume that the particles are reflected by the boundary following the specular reflection laws.
When the drag force is assumed independent of density in \eqref{5},
the hydrodynamic limits and the global existence of weak solutions to the Navier-Stokes and Vlasov-Fokker-Planck equations were studied in \cite{GJV,GJV2,MV,MV2}.  When the drag force  depends on the density  as in \eqref{5}, a relaxation of the kinetic regime toward a hydrodynamic regime with
velocity $\u$ on the vacuum $\{\rho=0\}$ can not be excepted.
% it is hard to deal with the existence of global weak solutions, hydrodynamic limit, and asymptotic analysis to the coupled system. Because the density involves in the interactions,
It is hard to establish a priori lower bounds on the density from the mathematics view point. The objective of this paper is to
establish the existence of global weak solutions to the initial-value problem
\eqref{1}-\eqref{boundary condition}  with large data in  certain functional spaces.
In general, the analysis of fluid-particle system is challenging since the density distribution function $f$ depends on more variables than the fluid density $\rho$ and velocity $\u$.
The existence of global weak solutions to the Stokes-Vlasov equations in a bounded domain was studied in \cite{H}. In \cite{BDGM}  the convection term was included and   the incompressible Navier-Stokes-Vlasov equations were considered in periodic domains. A similar system with  thermal diffusion acting on the particles, that is, the incompressible Navier-Stokes equations coupled with the Vlasov-Fokker-Planck equations, has been studied in \cite{GHMZ}, where the authors established the existence of classical solutions with small data.
Recently, the existences of global solutions to the incompressible Navier-Stokes-Vlasov equations in a bounded domain or in the whole space were obtained in \cite{YU1,YU2}. Also there are a lot of works on hydrodynamic limits, we refer the reader to \cite{CP,GJV,GJV2,MV2} (and the references therein) where  some scaling  and convergence methods such as the compactness and relative entropy method were applied to investigate the hydrodynamic limits. A key idea in \cite{GJV,GJV2} is to control  the dissipation rate of a certain free energy associated with the whole space.
The global existence of weak solutions to the compressible Navier-Stokes equations coupled to Vlasov-Fokker-Planck equations was established in \cite{MV}. The coupled system \eqref{1}-\eqref{5} has extra difficulties due to the appearance of density   in the interactions and in the Vlasov equation as well as the lack of diffusion in the Vlasov equation. We note that the local classical solution to the Euler-Vlasov equations was obtained in \cite{BD} when the drag force $F$ is assumed to be in the form of \eqref{5}.

When $f$ does not appear, the system \eqref{1}-\eqref{4} is reduced to the density-dependent Navier-Stokes equations. We refer readers to Lions \cite{L} for the compactness, the existence of global weak solutions and more background discussions on the density-dependent Navier-Stokes equations. For the Navier-Stokes equations, it is necessary for the external forces to be in the functional space $L^{2}(\Omega\times(0,T))$ in order to obtain the global existence. However, for the system \eqref{1}-\eqref{5} the term $-\int(\u-\v)\rho f d\v$ does not have enough regularity.  To overcome this difficulty, we  decompose the term into two components:
\begin{equation}
\label{decompose}
-\int_{\R^3}(\u-\v)\rho f d\v=-\rho\u\int_{\R^3}f\,d\v+\rho\int_{\R^3}\v f\,d\v,
\end{equation}
and we can view $\rho\int_{\R^3}\v f\,d\v$ as the external force of the Navier-Stokes equations. As we shall see later on, the work of internal forces $\rho\u\int_{\R^3}f\,d\v$ appears on the left side of Navier-Stokes equations and has to be nonnegative to keep the energy inequality. Meanwhile, we
introduce a  regularization function $R_{\delta}$ (see Section 3 for the definition) as in \cite{H} to construct an approximation of \eqref{decompose}:
\begin{equation*}
-R_{\delta}\int_{\R^3}(\u-\v)\rho f d\v=-\rho R_{\delta}(\int_{\R^3}f\,d\v)\u+\rho R_{\delta}\int_{\R^3}\v f\,d\v.
\end{equation*}
To keep a similar energy inequality for the approximation scheme, we need to add the regularized acceleration in the Vlasov equation too.
Then we see that the external force term is in $L^{2}(0,T;\O)$ and the internal forces is  finite when $\delta$ is fixed, hence we can solve the regularized Navier-Stokes equations.  Indeed, we can obtain the smooth solution of the regularized Navier-Stokes equations when the initial data is good enough.  The uniqueness and existence of the Vlasov equation can be obtained when $(\rho,\u)$ is smooth, see \cite{DL, H}. The next step is to pass the limit to recover the original system from the approximation scheme. We shall see that the $L^p$ regularity of velocity averages \cite{DLM} and fine compactness of the system guarantee
the existence of global weak solutions to \eqref{1}-\eqref{boundary condition}.

We organize the rest of the paper as follows. In Section 2, we deduce a prior estimates from \eqref{1}-\eqref{5}, give the definition of weak solutions, and also state our main results. In Section 3, we construct an approximation scheme to \eqref{1}-\eqref{5}, establish its global existence, use the uniform estimates and $L^p$ average velocity lemma to recover the original system.

\bigskip

%%%%%%%%%%%%%%%%%%%%%%%%%%
\section{A Priori Estimates and Main Results}

In this section, we shall derive some fundamental a priori estimates and then state our main results. These estimates will play an important role in the compactness analysis later since they will allow us to deduce the global existence upon passing to the limit in the regularized  approximation scheme.
We shall develop these  a priori  estimates in the three-dimensional space, but they all hold in the two-dimensional space.

First, roughly speaking, \eqref{1} and the incompressibility condition mean that the density $\rho(t,x)$ is independent of time $t$. In fact, we take any function $\beta\in C^{1}([0,\infty);\O)$, multiply \eqref{1} by $\beta'(\rho)$, use the incompressibility condition, and integration by parts over $\O$, then we have
\begin{equation*}
\frac{d}{dt}\int_{\O}\beta(\rho)dx=0.
\end{equation*}
Applying the maximum principle to the transport equations \eqref{1} and \eqref{3}, one deduces that
\begin{equation*}
\|\rho\|_{L^{\infty}}\leq\|\rho_{0}\|_{L^{\infty}},
\end{equation*}
and also $\rho\geq 0$, so we have
\begin{equation}
\label{2.1++}
0 \leq \rho(t,x)\leq\|\rho_{0}\|_{L^{\infty}}
\end{equation}
for almost every $t$.

We now multiply \eqref{2} by $\u$ and integrate over $\O$, and use \eqref{1}, \eqref{3}, and \eqref{5} to deduce that
\begin{equation}\label{2.1}
\frac{d}{dt}\int_{\O}\rho|\u|^{2}\;dx+2\int_{\O}|\nabla\u|^{2}\;dx
=-2\int_{\O}\int_{\R^{3}}\rho f(\u-\v)\cdot\u \;d\v\; dx.
\end{equation}
On the other hand, we multiply the Vlasov equation \eqref{4} by $\frac{|\v|^{2}}{2}$, integrate over $\O\times\R^{3}$, and use integration by parts to obtain

\begin{equation}\begin{split}\label{2.2}
&\frac{d}{dt}\int_{\O}\int_{\R^3}f|\v|^{2}\;d\v dx-\int_{\partial\O\times\R^3}(\v\cdot \nu)\frac{|\v|^2}{2}\rho f \,d\v\,dx
\\&=-2\int_{\O}\int_{\R^3}\rho f|\u-\v|^2 \;d\v dx
+2\int_{\O}\int_{\R^{3}}\rho f(\u-\v)\u\; d\v dx.
\end{split}\end{equation}

We can rewrite $\v^{*}$ as follows
\begin{equation*}
\v^{*}=R\v,\;\;\;\text{ where } R=I-2\nu\nu^{T},
\end{equation*}
where $\nu$ is the outward unit normal vector to $\partial \O$ for all $x\in\partial\O$.
By direct computation, one obtains that $|\v^*|^2\leq |\v|^2.$
Thus, we can treat the boundary term in \eqref{2.2} as follows:
\begin{equation}
\label{boundaryterm}
\begin{split}
&\int_{\partial\O\times\R^3}(\v\cdot\nu)\frac{|\v|^2}{2}\rho f \,d\v\,dx=\int_{\v\cdot\nu>0}(\v\cdot\nu)\frac{|\v|^2}{2}\rho f \,d\v\,dx+\int_{\v\cdot\nu<0}(\v\cdot\nu)\frac{|\v|^2}{2}\rho f \,d\v\,dx
\\&\leq\int_{\v\cdot\nu>0}(\v\cdot\nu)\frac{|\v|^2}{2}\rho f \,d\v\,dx-\int_{\v^*\cdot\nu>0}(\v^*\cdot\nu)\frac{|\v^*|^2}{2}\rho f \,d\v^*\,dx
=0.
\end{split}
\end{equation}

It is easy to get \begin{equation*}
\frac{d}{dt}\int_{\O}\int_{\R^3}f\; d\v dx-\int_{\partial\O\times\R^3}(\v\cdot\nu) \rho f \,d\v\,dx=0,
\end{equation*}
we treat the boundary term as follows
\begin{equation*}
\begin{split}
&\int_{\partial\O\times\R^3}(\v\cdot\nu) \rho f \,d\v\,dx
=\int_{\v\cdot\nu>0}(\v\cdot\nu) \rho f \,d\v\,dx+\int_{\v\cdot\nu<0}(\v\cdot\nu) \rho f \,d\v\,dx
\\&=\int_{\v\cdot\nu>0}(\v\cdot\nu) \rho f \,d\v\,dx-\int_{\v^*\cdot\nu>0}(\v^*\cdot\nu) \rho f(t,x,\v^*) \,d\v^*\,dx=0,
\end{split}
\end{equation*}
which implies that the conservation of mass:
\begin{equation} \label{CM}
\frac{d}{dt}\int_{\O}\int_{\R^3}f\; d\v dx=0.
\end{equation}
This together with \eqref{2.1}, \eqref{2.2} and \eqref{boundaryterm},
we obtain the following energy equality for the system
\eqref{1}-\eqref{5}:
\begin{equation}
\begin{split}
\label{2.3}
&\frac{d}{dt}\int_{\O}\rho|\u|^{2}\;dx+\frac{d}{dt}\int_{\O}\int_{\R^3}f(1+|\v|^{2})\;d\v
dx+2\int_{\O}\int_{\R^3}\rho f|\u-\v|^2\; d\v
dx\\&+2\int_{\O}|\nabla\u|^{2}\;dx
\leq0.
\end{split}\end{equation}

Integrating \eqref{2.3} with respect to $t$, we obtain for all $t$,
\begin{equation}\begin{split}\label{2.2++}
&\int_{\O}\rho|\u|^{2}\;dx+\int_{\O}\int_{\R^3} f(1+|\v|^{2})\;d\v
dx+2\int_{0}^{t}\int_{\O}\int_{\R^3}\rho f|\u-\v|^2 \;d\v
dxdt\\&+2\int_{0}^{t}\int_{\O}|\nabla\u|^{2}\;dxdt
\\&\leq \int_{\O}\frac{|\m_{0}|^2}{\rho_{0}}\;dx+\int_{\O}\int_{\R^3}f_{0}(1+|\v|^{2})\;d\v dx.
\end{split}
\end{equation}
By \eqref{2.3}, it is easy to find that the global energy is non-increasing with respect to $t$:
\begin{equation*}
\frac{d}{dt}\left(\int_{\O}\rho|\u|^{2}\;dx+\int_{\O}\int_{\R^3}f(1+|\v|^{2})\;d\v dx\right)\le 0.
\end{equation*}
Assume
 \begin{equation*}
\int_{\O}\frac{|\m_{0}|^2}{\rho_{0}}\;dx+\int_{\O}\int_{\R^3}f_{0}(1+|\v|^{2})\;d\v dx< \infty,
\end{equation*}
then
\begin{equation*}
\int_0^t \int_{\O}\int_{\R^3} \rho f|\u-\v|^2 \;d\v dxdt\leq C,
\end{equation*}
and
\begin{equation}
\label{2.3++}
\|\nabla\u\|_{L^{2}((0,T)\times\O)}\leq C,
\end{equation}
\begin{equation}
\label{2.4+}
\sup_{0\leq t\leq T}\|\rho|\u|^{2}\|_{L^{1}(\O)}\leq C,
\end{equation}
for any given $T>0$ and some generic positive constant $C$. %where C denote various constants.
Moreover, by the Poincar\'e inequality we obtain
\begin{equation}
\label{2.5+}
\|\u\|_{L^{2}(0,T;H^1_0(\O))}\leq C.
\end{equation}

The maximum principle applied to \eqref{4} implies that
\begin{equation}
\label{2.6+}
\|f\|_{L^{\infty}}\leq C\|f_{0}\|_{L^{\infty}}
\end{equation}
for all $t\in [0,T]$.   Moreover,  $f_{0}\geq 0$ implies $f\geq 0$ for almost every  $(t,x,\v).$
%As the same to before, we have mass conservation law
%\begin{equation*}
%\frac{d}{dt}\int_{\O}\int_{\R^3}f d\v dx=0.
%\end{equation*}
Then, by the conservation of mass \eqref{CM}  and \eqref{2.6+}, one has   the following estimate:
\begin{equation} \label{EE1}
\begin{split}
&\|f\|_{L^{\infty}((0,T)\times\O\times\R^3)}+\|f\|_{L^{\infty}(0,T;L^{1}(\O\times\R^3))}
\\& \leq C\left(\|f_{0}\|_{L^{\infty}((0,T)\times\O\times\R^3)}+\|f_{0}\|_{L^{\infty}(0,T;L^{1}(\O\times\R^3))}\right).
\end{split}
\end{equation}

%We derived an energy inequality and some a priori estimates, which are crucial in the rest of this paper.
%However, the force term of Navier-Stokes equations $-\int_{\R^3}(\u-\v)\rho f d\v$ may not have enough regular to solve it directly. We will thus make other ways to treat with this nonlinear term. We will detail it in Section 3.

%The energy inequality can not provide enough regularity for the term $-\int_{\R^3}(\u-\v)\mu f d\v$ to  solve the Navier-Stokes equations \eqref{1}- \eqref{3}, and \eqref{5}  directly. We will use other ideas to deal with the strong nonlinear term.

Let $\w(t,x)$ be a smooth vector field in $\R^3$ and let $f$ be a solution to the following kinetic equation:
\begin{equation}
\label{2.7+}
\begin{split}
&\partial_{t} f+\v \cdot \nabla_{x} f+\Dv_{\v}((\w-\v)f)=0,
\\& f(0,x,\v)=f_{0}(x,\v),\;\;f(t,x,\v)=f(t,x,\v^{*}) \;\text{ for }x\in\partial\O,\;\; \v\cdot\nu(x)<0
\end{split}
\end{equation}
in $\O\times\R^3.$
 DiPerna-Lions \cite{DL} obtained the existence and uniqueness of solution to \eqref{2.7+} when $\w$ is not smooth. %Put more references here.
Denote the moments of $f$   by
 \begin{equation*}
 \begin{split}
 &m_{k}f(t,x)=\int_{\R^3}f(t,x,\v)|\v|^{k}\;d\v,
 \\&M_{k}f(t)=\int_{\O}\int_{\R^3}f(t,x,\v)|\v|^{k}\;d\v dx,
 \end{split}
 \end{equation*}
 for any $t\in [0,T]$, $x \in\O$, and  integer $k \geq 0$.
 It is clear that
 \begin{equation*}
 M_{k}f(t)=\int_{\O}m_{k}f(t,x)dx.
 \end{equation*}
We first recall the following lemma \cite{H}:
\begin{Lemma}\label{l2+}
Let $\w\in L^{p}(0,T;L^{N+k}(\O))$ with $1\leq p\leq\infty$ and $k\geq1.$
Assume that $f_{0}\in (L^{\infty}\cap L^{1})(\O\times\R^3)$ and $m_{k}f_{0}\in L^{1}(\O\times\R^3)$. Then, the solution $f$ of \eqref{2.7+} should have the following estimates
\begin{equation*}
M_{k}f\leq C\left((M_{k}f_{0})^{1/(N+k)}+(|f_{0}|_{L^{\infty}}+1)\|\w\|_{L^{p}(0,T;L^{N+k}(\O))}\right)^{N+k}
\end{equation*}
for all $0\leq t\leq T$ where the constant $C $  depends only on $T$.
\end{Lemma}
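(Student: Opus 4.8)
The plan is to run the classical velocity-moment estimate for kinetic transport equations: test \eqref{2.7+} against $|\v|^k$, integrate over $\O\times\R^N$, use the specular reflection to discard the boundary flux, integrate by parts in $\v$, and close the resulting differential inequality for $M_k f(t)$ by interpolation together with the uniform bound \eqref{2.6+} on $f$. Since the statement is quoted for the DiPerna--Lions solution, I would first establish the estimate for smooth $\w$ and the corresponding smooth $f$ and then pass to the limit using the stability of that construction; and to make the moment manipulations rigorous I would carry out the computation with $|\v|^k$ replaced by $|\v|^k\wedge n$ (or multiplied by a cutoff $\chi(|\v|/n)$) and let $n\to\infty$ by monotone convergence.

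The first step is the identity
\begin{equation*}
\f{d}{dt}M_k f = k\int_\O\int_{\R^N}(\w\cd\v)\,|\v|^{k-2}f\,d\v\,dx - k\,M_k f .
\end{equation*}
The transport term $\v\cd\na_x f$ contributes only the boundary integral $\int_{\partial\O}\int_{\R^N}(\v\cd\nu)\,|\v|^k f\,d\v\,d\sigma$, which vanishes exactly as in \eqref{boundaryterm}: the reflection $\v\mapsto\v^*=R\v$ is orthogonal, hence preserves $|\v|$ and $d\v$ while reversing the sign of $\v\cd\nu$, and the boundary condition identifies $f(t,x,\v)$ with $f(t,x,\v^*)$ on the incoming set. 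The term $\Dv_\v((\w-\v)f)$ produces, after integration by parts in $\v$ with $\na_\v|\v|^k=k|\v|^{k-2}\v$, the two terms on the right-hand side. Since $-kM_kf\le0$ I would discard it and bound the remaining term by $k\int_\O|\w|\,m_{k-1}f\,dx$.

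The heart of the proof is the pointwise interpolation $m_{k-1}f\le C\,\|f\|_{L^\infty}^{1/(N+k)}(m_kf)^{(N+k-1)/(N+k)}$, obtained by splitting $\int_{\R^N}|\v|^{k-1}f\,d\v$ at a radius $\lambda$, bounding the inner part by $\|f\|_{L^\infty}\lambda^{N+k-1}$ and the outer part by $\lambda^{-1}m_kf$, and optimizing in $\lambda$; \eqref{2.6+} then lets me replace $\|f\|_{L^\infty}$ by $C\|f_0\|_{L^\infty}$. Inserting this and applying Hölder in $x$ with the conjugate exponents $N+k$ and $(N+k)/(N+k-1)$ gives
\begin{equation*}
\f{d}{dt}M_k f \le C\,(|f_0|_{L^\infty}+1)\,\|\w(t)\|_{L^{N+k}(\O)}\,(M_k f)^{(N+k-1)/(N+k)} .
\end{equation*}
This is a Bernoulli-type inequality: dividing by $(M_kf)^{(N+k-1)/(N+k)}$ turns the left side into $(N+k)\f{d}{dt}\big[(M_kf)^{1/(N+k)}\big]$, so integrating in $t$ and then applying Hölder in time (cost $T^{1-1/p}$, converting the $L^1_t$ norm of $\|\w\|_{L^{N+k}}$ into its $L^p_t$ norm) yields
\begin{equation*}
(M_k f(t))^{1/(N+k)}\le (M_kf_0)^{1/(N+k)}+C\,(|f_0|_{L^\infty}+1)\,\|\w\|_{L^p(0,T;L^{N+k}(\O))}
\end{equation*}
with $C=C(T)$; raising both sides to the power $N+k$ gives the claim. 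The only point demanding care beyond routine computation is aligning the interpolation exponent with the two Hölder pairings (in $x$ and in $t$) so that exactly $\|\w\|_{L^p(0,T;L^{N+k}(\O))}$, with precisely the $(N+k)$-th power, emerges; the reflection/boundary cancellation and the approximation step are standard.
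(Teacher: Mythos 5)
The paper does not prove this lemma --- it is quoted verbatim from Hamdache \cite{H} --- and your argument is exactly the standard moment-propagation proof given there: the $|\v|^k$ energy identity with the specular-reflection cancellation of the boundary flux, the interpolation $m_{k-1}f\le C\|f\|_{L^\infty}^{1/(N+k)}(m_kf)^{(N+k-1)/(N+k)}$, H\"older in $x$ and $t$, and the Bernoulli-type ODE for $(M_kf)^{1/(N+k)}$. The computation is correct (the only routine points left implicit are replacing $M_kf$ by $M_kf+\eta$ before dividing, and noting that for \eqref{2.7+} the $L^\infty$ bound carries a harmless factor $e^{Nt}$ from $\Dv_\v(\w-\v)=-N$), so there is nothing to object to.
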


We also recall the average compactness result for the Vlasov equation due to Di Perna-Lions-Meyer \cite{DLM}:
\begin{Lemma}
\label{l3+}
Suppose
\begin{equation*}
\frac{\partial f^{n}}{\partial t}+\v\cdot\nabla_{x}f^n=\Dv_{\v}(F^n f^n)\quad\quad\quad\text{ in } \mathcal{D}'(\O+\R^3\times(0,\infty))
\end{equation*}
where $f^n$ is bounded in $L^{\infty}(0,\infty;L^2(\O\times\R^3))$ and  $f^n(1+|\v|^2)$ is bounded in $ L^{\infty}(0,\infty; L^{1}(\O\times\R^3)$, $\frac{F^n}{1+|\v|}$ is bounded in $L^{\infty}((0,\infty)\times \R^3;L^{2}(\O)).$
Then $\int_{\R^3}f^n\eta(\v)d\v$ is relatively compact in $L^{q}(0,T;L^{p}(\O))$ for $1\leq q <\infty, 1\leq p< 2$ and for $\eta$ such that $\frac{\eta}{(1+|\v|)^\sigma}\in L^1+L^{\infty}, \sigma \in [0,2).$
\end{Lemma}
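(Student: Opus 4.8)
The plan is to deduce this from the classical $L^{2}$ velocity averaging lemma of DiPerna--Lions--Meyer \cite{DLM}, the only genuinely new ingredient being a velocity truncation (together with a mollification of $\eta$) that reduces the general weight $\eta$ to one that is smooth with compact support in $\v$. So I would first fix a cutoff $\chi\in C_{c}^{\infty}(\R^{3})$ with $\chi\equiv1$ on $\{|\v|\le1\}$ and $\supp\chi\subset\{|\v|\le2\}$, and split, for $R\ge1$,
\[
\eta=\eta\,\chi(\v/R)+\eta\,(1-\chi(\v/R))=:\eta_{R}+\eta^{R}.
\]
The piece carrying $\eta^{R}$ is to be treated as an error: writing $|\eta^{R}(\v)|\le(1+|\v|)^{\sigma}\,(|g_{1}|+|g_{2}|)\,\mathbf 1_{\{|\v|\ge R\}}$ with $g_{1}\in L^{1}(\R^{3})$, $g_{2}\in L^{\infty}(\R^{3})$, $\sigma<2$, and using the pointwise interpolation $f^{n}(1+|\v|)^{\sigma}=\big(f^{n}(1+|\v|^{2})\big)^{\sigma/2}(f^{n})^{1-\sigma/2}$ with H\"older in $\v$ and the bounds on $f^{n}$ (in $L^{\infty}(0,T;L^{2}(\O\times\R^{3}))$, and in the application also in $L^{\infty}((0,T)\times\O\times\R^{3})$) and on $f^{n}(1+|\v|^{2})$ in $L^{\infty}(0,T;L^{1}(\O\times\R^{3}))$, I would show that
\[
\sup_{n}\Big\|\int_{\R^{3}}f^{n}\eta^{R}\,d\v\Big\|_{L^{q}(0,T;L^{p}(\O))}\longrightarrow0\qquad(R\to\infty)
\]
for every $1\le q<\infty$ and $1\le p<2$; the restriction $p<2$ enters exactly here, since the weighted velocity moment is controlled only in $L^{1}_{x}$, whereas the $L^{2}$ bound on $f^{n}$ already suffices for the compactly supported part.

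For the truncated piece (which, approximating a general $\eta$, we may take smooth and supported in $\{|\v|\le2R\}$) I would multiply the kinetic equation by $\eta_{R}$ and integrate by parts in $\v$ to obtain, in $\mathcal D'$,
\[
\partial_{t}\Big(\int_{\R^{3}}f^{n}\eta_{R}\,d\v\Big)+\na_{x}\cdot\Big(\int_{\R^{3}}\v\,f^{n}\eta_{R}\,d\v\Big)=-\int_{\R^{3}}F^{n}f^{n}\cdot\na_{\v}\eta_{R}\,d\v .
\]
On $\{|\v|\le2R\}$ the hypothesis that $F^{n}/(1+|\v|)$ is bounded in $L^{\infty}((0,T)\times\R^{3};L^{2}(\O))$ gives $F^{n}$ bounded in $L^{\infty}((0,T)\times\{|\v|\le2R\};L^{2}(\O))$, so (again using $f^{n}\in L^{\infty}$) the source $F^{n}f^{n}\cdot\na_{\v}\eta_{R}$ is bounded in $L^{2}$ over $(0,T)\times\O\times\{|\v|\le2R\}$, and $f^{n}\eta_{R}$ is bounded there in $L^{2}$ as well. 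Then the $L^{2}$ averaging lemma of \cite{DLM} for transport with a $\v$-divergence on the right applies and yields a uniform bound on $\int_{\R^{3}}f^{n}\eta_{R}\,d\v$ in $H^{s}$ (locally in $(t,x)$) for some $s>0$; since $\O$ is bounded, compact Sobolev embedding makes $\int_{\R^{3}}f^{n}\eta_{R}\,d\v$ relatively compact in $L^{2}((0,T)\times\O)$, which I would upgrade to $L^{q}(0,T;L^{p}(\O))$, $1\le q<\infty$, $1\le p<2$, by interpolating with the uniform bounds recorded in \eqref{EE1}. Assembling the two pieces: given $\varepsilon>0$, choose $R$ so that $\sup_{n}\|\int_{\R^{3}}f^{n}\eta^{R}\,d\v\|_{L^{q}(0,T;L^{p}(\O))}<\varepsilon$; then $\{\int_{\R^{3}}f^{n}\eta\,d\v\}_{n}$ lies within $\varepsilon$ of the relatively compact set $\{\int_{\R^{3}}f^{n}\eta_{R}\,d\v\}_{n}$, hence is totally bounded, hence relatively compact, in $L^{q}(0,T;L^{p}(\O))$.

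I expect the two real obstacles to be the $\v$-derivative on the right-hand side and the bounded domain. The $\v$-divergence forces the integration by parts above before any averaging estimate is available, and then one must verify that $F^{n}f^{n}$ genuinely belongs to $L^{2}$ on the truncated region --- which is precisely why the uniform $L^{\infty}$ bound \eqref{2.6+} on $f^{n}$, not just its $L^{2}$ bound, is used. Secondly, the averaging lemma of \cite{DLM} is stated on $\R^{3}_{x}$ (or the torus), so to obtain compactness on all of $\O$ rather than on compact subsets one must handle $\partial\O$: I would flatten the boundary in local charts and extend $f^{n}$ across it using the specular reflection relation $f^{n}(t,x,\v)=f^{n}(t,x,\v^{*})$ (in the spirit of the boundary computation \eqref{boundaryterm}), and patch finitely many local estimates to reduce the bounded-domain statement to the classical whole-space one.
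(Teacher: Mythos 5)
The paper offers no proof of this lemma at all: it is quoted verbatim from DiPerna--Lions--Meyer \cite{DLM} (the same statement is recalled, likewise without proof, in \cite{H} and \cite{MV}), so there is no in-paper argument to compare yours against. On its own terms, your outline is the standard route --- truncate in $\v$, apply the $L^{2}$ averaging lemma to the compactly supported piece, control the tail by moment bounds, conclude by total boundedness --- and it does deliver the two instances actually used in the paper ($\eta=1$ and $\eta=\v$, where one additionally has $f^{n}$ bounded in $L^{\infty}_{t,x,\v}$ and $m_{3}f^{n}$ bounded in $L^{\infty}(0,T;L^{1}(\O))$).

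As a proof of the lemma in its full stated generality, however, the tail estimate does not close. The hypotheses control $\int_{|\v|\ge R}f^{n}(1+|\v|)^{\sigma}(|g_{1}|+|g_{2}|)\,d\v$ only in $L^{\infty}(0,T;L^{1}(\O))$ (with size $O(R^{\sigma-2})$), and smallness in $L^{1}_{x}$ alone does not give smallness in $L^{p}_{x}$ for $1<p<2$; to interpolate you need a uniform bound on the tail in some $L^{r}_{x}$ with $r\ge p$, and the interpolation $\int f^{n}(1+|\v|)^{\sigma}d\v\le(\int f^{n}d\v)^{1-\sigma/2}(\int f^{n}(1+|\v|)^{2}d\v)^{\sigma/2}$ only reaches $r=4/(2+\sigma)$, which degrades to $1$ as $\sigma\uparrow2$ and so does not cover the whole range $p<2$. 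Moreover the $L^{1}$-part $g_{1}$ of the weight is not handled by the sup bound on $f^{n}$, since $(1+|\v|)^{\sigma}g_{1}$ need not be integrable. Obtaining all $p<2$ for all $\sigma\in[0,2)$ is exactly what the real-interpolation machinery of \cite{DLM} provides, so you should either invoke their $L^{p}$ theorem as a black box (as this paper does) or restrict the statement to the cases needed here. A minor simplification on the other point you raise: the boundary reflection is unnecessary, because for fixed $R$ the truncated averages are uniformly bounded in $L^{\infty}(0,T;L^{2}(\O))$ by Cauchy--Schwarz in $\v$ over $\{|\v|\le2R\}$, so interior compactness plus equi-integrability near $\partial\O$ already upgrades to compactness in $L^{p}(\O)$ for $p<2$.
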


\begin{Remark}
We shall use this lemma for the Vlasov equation to obtain the compactness of $m_0f$ and $m_1f$, which will allow us to pass the limit when $\varepsilon$ and $\delta$ go to zero in the approximation.
\end{Remark}

In this paper, we assume that
\begin{equation}\begin{split}\label{2.4}
&\rho_{0}\geq 0 \; \text{ almost everywhere in } \O, \;\; \rho_{0}\in L^{\infty}(\O),
\\& \m_{0}\in L^2(\O),\quad \m_{0}=0\; \text{ almost everywhere on } \{\rho_{0}=0\},
\quad \frac{|\m_{0}|^2}{\rho_{0}} \in L^1(\O),
\\& f_{0}\in L^{\infty}(\O\times\R^3),\quad m_{3}f_{0} \in L^{\infty}((0,T)\times \O).
\end{split}\end{equation}

\begin{Definition} \label{D1}
We say that $(\rho,\u,f)$ is a global weak solution to problem \eqref{1}-\eqref{8} if the following conditions are satisfied: for any $T>0$,
\begin{itemize}%\addtolength{\itemsep}{-0.2\baselineskip}
\item  $\rho\geq 0, \quad \rho \in L^{\infty}([0,T]\times \O),\quad \rho \in C([0,T];L^{p}(\O)), \quad 1 \leq p <\infty;$
\item $\u \in L^{2}(0,T;H^{1}_0(\O));$
\item $\rho|\u|^{2} \in L^{\infty}(0,T;L^{1}(\O));$
\item $f(t,x,\v)\geq 0, \text { for any } (t,x,\v) \in (0,T)\times \O\times \R^3;$
\item $f \in L^{\infty}(0,T;L^{\infty}(\O\times\R^3) \cap L^1(\O\times\R^3));$
\item $m_{3}f \in L^{\infty}(0,T;L^1(\O\times\R^3));$
\item For any $\varphi \in C^{1}([0,T]\times \O)$, such that $\Dv_{x}\varphi=0$, for almost everywhere $t$,
\begin{equation}
\label{2.1+}
\begin{split}
&-\int_{\O}\m_{0}\cdot\varphi(0,x)\;dx+\int_{0}^{t}\int_{\O}\bigg(-\rho\u\cdot\partial_{t}\varphi
-(\rho\u\otimes\u)\cdot\nabla\varphi
\\&\qquad\qquad\qquad\qquad\qquad\qquad\qquad
+\mu \nabla\u\cdot\nabla\varphi+\mu\rho\int_{\R^3}f(\u-\v)\cdot \varphi \,d\v\bigg)\;dxdt=0;
\end{split}
\end{equation}
\item For any $\phi \in C^1([0,T]\times\O\times\R^3)$ with compact support in $\v$, such that $\phi(T,\cdot,\cdot)=0$,
\begin{equation}
\label{2.2+}
\begin{split}
&-\int_{0}^{T}\int_{\O}\int_{\R^3}f\left({\partial_t\phi+\v\cdot\nabla_{x}\phi+\rho(\u-\v) \cdot\nabla_{\v}\phi}\right)\;dxd\v ds=\int_{\O}\int_{\R^3}f_{0}\phi(0,\cdot,\cdot)\;dxd\v;
\end{split}
\end{equation}
\item The energy inequality
\begin{equation}
\label{2.3+}
\begin{split}
&\int_{\O}\rho|\u|^{2}dx+\int_{\O}\int_{\R^3}f(1+|\v|^{2})\;d\v dx+2\int_{0}^{T}\int_{\O}\int_{\R^3} f|\u-\v|^2\; d\v dxdt\\
&\qquad +2\int_{0}^{T}\int_{\O}|\nabla\u|^2\;dxdt
\\&\leq\int_{\O}\frac{|\m_{0}|^2}{\rho_{0}}\;dx+\int_{\O}\int_{\R^3}(1+|\v|^{2})f_{0}\; d\v dx
\end{split}
\end{equation}
\text{holds for almost everywhere} $t\in[0,T].$
 \end{itemize}
 \end{Definition}
Our main result on the global weak solutions reads as follows.
 \begin{Theorem}\label{T1}
 Under the assumption \eqref{2.4}, there exists a global weak solution $(\rho,\u,f)$ to the initial-boundary value problem \eqref{1}-\eqref{boundary condition} for any $T>0$.

 %And the function $f$ satisfies the $L^{\infty}$ bound
 %\begin{equation}
 %\label{2.11+}
 %\|f\|_{L^{\infty}((0,T)\times\O\times\R^3)}\leq e^{3C T}\|f_{0}\|_{L^{\infty}(\O\times\R^{3})},
 %\end{equation}
 %where $\|\mu(\rho)\|_{L^{\infty}(\O)}\leq C.$
 \end{Theorem}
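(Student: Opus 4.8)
The plan is to construct an approximation scheme, derive uniform estimates from the energy structure displayed in \eqref{2.3} and \eqref{EE1}, and then pass to the limit using the averaging Lemma \ref{l3+}. More precisely, I would first introduce, as indicated in the introduction, a double regularization: a mollification parameter $\varepsilon$ for the density-dependent coupling and the regularization operator $R_\delta$ acting on the velocity moments $\int_{\R^3} f\,d\v$ and $\int_{\R^3}\v f\,d\v$. The regularized fluid equation replaces the drag term by $-\rho R_\delta(m_0 f)\u + \rho R_\delta(m_1 f)$; correspondingly the Vlasov equation \eqref{2.7+} is modified with acceleration $\rho R_\delta(\u) - \rho\v$ (or the analogous symmetrized form) so that the approximate system still enjoys an energy inequality mirroring \eqref{2.3} — the key point being that the "internal force" work term $\int_\O \rho R_\delta(m_0 f)|\u|^2\,dx$ is nonnegative and lands on the good side. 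For fixed $(\varepsilon,\delta)$ with smooth initial data, one solves the density-dependent Navier–Stokes system (using the theory in Lions \cite{L}) to get a smooth $(\rho,\u)$, and then solves the linear Vlasov equation by the DiPerna–Lions theory \cite{DL,H}; a fixed-point / iteration argument closes the construction of the approximate solution $(\rho_{\varepsilon,\delta},\u_{\varepsilon,\delta},f_{\varepsilon,\delta})$, with the a priori bounds \eqref{2.1++}, \eqref{2.3++}--\eqref{2.5+}, \eqref{2.6+}, \eqref{EE1} and the moment bound from Lemma \ref{l2+} (applied with $k=3$, using $m_3 f_0\in L^\infty$) holding uniformly.

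Next I would pass to the limit, first in $\varepsilon$ and then in $\delta$ (or simultaneously). From \eqref{2.1++} and equation \eqref{1} together with $\Dv\u=0$, the density $\rho_{\varepsilon,\delta}$ is bounded in $L^\infty$ and, via the renormalized transport theory, compact in $C([0,T];L^p(\O))$ for $p<\infty$; the velocity $\u_{\varepsilon,\delta}$ is bounded in $L^2(0,T;H^1_0)$ by \eqref{2.5+}, and standard parabolic-type arguments on \eqref{2} give a uniform bound on $\partial_t(\rho\u)$ in a negative-order space, yielding by Aubin–Lions the strong convergence of $\rho_{\varepsilon,\delta}\u_{\varepsilon,\delta}$ in $L^2$ and hence of $\sqrt{\rho_{\varepsilon,\delta}}\,\u_{\varepsilon,\delta}$. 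For the kinetic part, Lemma \ref{l3+} applies because $f_{\varepsilon,\delta}$ is bounded in $L^\infty(0,T;L^2(\O\times\R^3))$, $f_{\varepsilon,\delta}(1+|\v|^2)$ is bounded in $L^\infty(0,T;L^1)$, and the acceleration $F^n=\rho_{\varepsilon,\delta}(\u_{\varepsilon,\delta}-\v)$ satisfies $F^n/(1+|\v|)\in L^\infty(t,\v;L^2(\O))$ thanks to \eqref{2.1++} and the dissipation bound $\rho f|\u-\v|^2\in L^1$; this gives strong $L^q(0,T;L^p(\O))$ convergence of the velocity averages $m_0 f_{\varepsilon,\delta}$ and $m_1 f_{\varepsilon,\delta}$. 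I would then check that $R_\delta(m_0 f)$ and $R_\delta(m_1 f)$ converge to $m_0 f$, $m_1 f$ using this compactness and the continuity of $R_\delta$, combine with the strong convergence of $\rho$ and $\rho\u$ to identify the limit of the coupling terms $\rho R_\delta(m_0 f)\u \to (m_0 f)\rho\u$ and $\rho R_\delta(m_1 f)\to \rho\, m_1 f$, and pass to the limit in the weak formulations \eqref{2.1+} and \eqref{2.2+}. The energy inequality \eqref{2.3+} follows by lower semicontinuity of the convex/quadratic terms under weak convergence together with the strong convergence of $\sqrt{\rho}\u$.

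The main obstacle, and the reason for the decomposition \eqref{decompose}, is the limited regularity of the drag term $-\int_{\R^3}(\u-\v)\rho f\,d\v$: unlike for the pure Navier–Stokes equations one cannot put this source in $L^2(\O\times(0,T))$ directly, so one must handle $\rho\u\, m_0 f$ as a term on the left-hand side whose sign is controlled, while only $\rho\, m_1 f$ plays the role of a genuine (square-integrable, once $\delta$ is fixed) external force. The delicate points are therefore: (i) arranging the $\delta$-regularization so that the approximate energy identity retains the nonnegativity of the internal-force work term, which forces the symmetric modification of the Vlasov acceleration; and (ii) justifying the passage to the limit in the product $\rho_{\varepsilon,\delta}\u_{\varepsilon,\delta}\, m_0 f_{\varepsilon,\delta}$, where weak convergence of each factor is not enough — one genuinely needs the strong $L^2$ convergence of $\rho\u$ (from Aubin–Lions on \eqref{2}) against the strong average compactness of $m_0 f$ (from Lemma \ref{l3+}), and a careful treatment of the vacuum set $\{\rho=0\}$ where the kinetic regime does not relax to the fluid velocity. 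Controlling the $|\v|^3$ moment uniformly via Lemma \ref{l2+} (which requires $\u\in L^2(0,T;L^{N+3})$, consistent with \eqref{2.5+} in dimension $N=3$ by Sobolev embedding) is what gives enough integrability of $m_1 f$ to close the argument. The boundary term from specular reflection vanishes exactly as in \eqref{boundaryterm}, so no new difficulty arises there.
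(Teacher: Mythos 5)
Your proposal is correct and follows essentially the same route as the paper: the same $(\varepsilon,\delta)$ double regularization with $R_\delta$ inserted consistently in both the fluid drag term and the Vlasov acceleration to preserve the energy identity, a fixed-point construction of the approximate solutions, uniform bounds from the energy inequality and Lemma \ref{l2+} with $k=3$, compactness of $m_0f$ and $m_1f$ from Lemma \ref{l3+}, Lions' compactness for $\sqrt{\rho}\u$ and $\rho\u$, and the sequential passage $\varepsilon\to0$ then $\delta\to0$. The only cosmetic differences are that the paper proves actual convergence of the dissipation term $\int\rho f|\u-\v|^2$ by splitting it into three pieces rather than invoking lower semicontinuity, and handles the initial vacuum via the Hodge--de Rham decomposition of $\m_0^\varepsilon$; neither changes the substance of your argument.
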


 \begin{Remark}
The same existence of global weak solutions  holds also in two-dimensional spaces.
 \end{Remark}
%\begin{Remark}
%\label{remarkofdensity}If we have for all $0\leq\alpha\leq\beta<\infty,$    ????????,  then
%\begin{equation*}
%\text{measure }\{x\in\R^N|\alpha\leq \rho(t,x)\leq \beta\}
%\end{equation*}is independent of $t\geq 0$.
%Similar conclusion does hold for the distribution function $f$.
%\end{Remark}

\bigskip

%%%%%%%%%%%%%%%%%%%%%%%%%%
\section{Existence of Global Weak Solutions}
In this section, we are going to prove Theorem \ref{T1} in two steps.
First, we build a regularized approximation system for the original system, and solve this approximation system.
Then, we recover the original system from the approximation scheme by passing to the limit of the sequence of  solutions obtained in  the first step.

\subsection{Construction of approximation solutions}
For each $\varepsilon>0$, we define
\begin{equation*}
\theta_{\varepsilon}:=\varepsilon^{3}\theta\left(\frac{x}\varepsilon\right)
\end{equation*}
and denote $$\u_{\varepsilon}:=\u\ast\theta_{\varepsilon},$$
where  $\theta$  is the the standard mollifier satisfying
$$\theta \in C^{\infty}(\R^3), \theta\geq0, \int_{\R^3}\theta\, dx=1.$$

By \eqref{2.1++}, all values of the solution $\rho$ are bounded uniformly.
The regularity of the term $-\int_{\R^3}(\u-\v)\rho f d\v$ is not enough to solve the Navier-Stokes equation directly. Inspired by the work of \cite{H}, we introduce the following regularization function
\begin{equation*}
R_{\delta}(m_0f,m_1f)=\frac{1}{1+\delta\int_{\R^3}f\;d\v+\delta\left|\int_{\R^3}f\v d\v \right|},\quad \text{ for any fixed } \delta>0.
\end{equation*}
Clearly $$0<R_{\delta}(m_0f,m_1f)<1$$ for any $\delta >0$, and $$ R_{\delta}(m_0f,m_1f)\to1$$ as $\delta\to 0.$
For any fixed $\delta>0$, as mentioned in the introduction, the regularized force term
$$\rho R_{\delta}\int_{\R^3}(\u-\v)f\;d\v$$
consists of two terms:
$$\rho\left( R_{\delta}\int_{\R^3}f\,d\v \right)\u\;\;\;\text{ and }\;\;\rho\left(R_{\delta}\int_{\R^3}\v f\,d\v\right)$$
the first one is viewed as the work of internal force, and the second one is viewed as the external force.
The regularized external force is in $L^{2}((0,T)\times\O),$ which ensures  that the regularized Navier-Stokes equations with the work of internal force  have a smooth solution.
To keep a similar energy inequality for the approximation scheme, we need to  regularize the acceleration term  as
$$R_{\delta}(\u-\v)\rho f$$ in the Vlasov equation. Thus, we consider the following approximation problem:
\begin{equation}\label{3.1}
\rho_{t}+\Dv(\rho\u_{\varepsilon})=0,
\end{equation}
\begin{equation}\label{3.2}
\frac{\partial(\rho\u)}{\partial{t}}+\Dv(\rho\u_{\varepsilon}\otimes\u)-\mu\D\u+\nabla p +\rho\left( R_{\delta}\int_{\R^3}f\,d\v \right)\u=\rho \left(R_{\delta}\int_{\R^3}\v f\,d\v\right),
\end{equation}
\begin{equation} \label{3.3}
\Dv{\u}=0,
\end{equation}
\begin{equation}\label{3.4}
\frac{\partial f}{\partial t}+\v\cdot\nabla f+\Dv_{\v}( R_{\delta}(\u-\v)\rho f)=0.
\end{equation}
To define $\u_{\varepsilon}$ well, we need to set $$\O_{\varepsilon}=\{x\in\O, \,\,\text{dist}(x,\partial\O)>\varepsilon\}$$ for any $\varepsilon>0$ if $\O$ is smooth. Otherwise, we can choose a smooth connected domain $\O_{\varepsilon}$ such that
\begin{equation*}
\{x\in\O,\,\,\text{dist}(x,\partial \O)>\varepsilon\}\subset\O_{\varepsilon}\subset \overline{\O_{\varepsilon}}\subset\O.
\end{equation*}
We let $\hat{\u}^{\varepsilon}$ to be the truncation in $\O_{\varepsilon}$ of $\u$, and we extended it by $0$ to $\O$. We define $\u_{\varepsilon}=\hat{\u}^{\varepsilon}*\theta_{\frac{\varepsilon}{2}}$.
It is easy to find that $\u_{\varepsilon}$ is a smooth function with respect to $x$, and
$$\u_{\varepsilon}=0\;\;\text{ on }\;\partial\O\;\;\text{ and }\;\;\Dv\u_{\varepsilon}=0\;\;\text{ in }\;\R^d.$$

To impose the initial value for our approximate system, we need the following elementary variant of Hodge-de Rham decomposition (see \cite{L}):

\begin{Lemma}\label{l1+}
Let $N\geq 2$, $\rho\in L^{\infty}(\R^{N})$ such that $\rho\geq \underline{\rho}\geq 0$ almost everywhere on $\R^{N}$ for some $\underline{\rho}\in (0,\infty).$
Then there exists two bounded operators $P_{\delta}$, $Q_{\delta}$ on $L^{2}(\R^{N})$ such that for all $\m\in L^{2}(\R^{N})$, $(\m_p,\m_q)=(P_{\rho}\m,Q_{\rho}\m)$ is the unique solution in $L^{2}(\R^{N})$ of
\begin{equation*}
\m=\m_p+\m_q,\quad\quad\quad (-\D)^{-1/2}\Dv({\rho}^{-1}\m_p)=0,\quad(-\D)^{-1/2}rot(\m_q)=0.
\end{equation*}
Furthermore, if $\rho_{n}\in L^{\infty}(\R^{N})$, $\underline{\rho}\leq \rho_{n}\leq \bar{\rho}$ almost everywhere on $\R^{N}$ for some $0<\underline{\rho} \leq \bar{\rho}< \infty$ and $\rho_{n}$ converges almost everywhere to $\rho$, then $(P_{ \rho_{n} }\m_{n},Q_{\rho_{n}}\m_{n})$ converges weakly in $L^{2}(\R^{N})$ to
$(P_{\rho}\m,Q_{\rho}\m)$ whenever $\m_{n}$ converges weakly to $\m$.
\end{Lemma}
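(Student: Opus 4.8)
The plan is to reinterpret the two constraints as a weighted Helmholtz (Hodge--de Rham) decomposition and to realize $P_\rho$ and $Q_\rho$ as complementary orthogonal projections for a $\rho$-weighted inner product. First I would note that $(-\D)^{-1/2}$ is the Fourier multiplier $|\xi|^{-1}$ and hence injective, so the stated conditions are equivalent to $\Dv(\rho^{-1}\m_p)=0$ and $rot(\m_q)=0$ in the sense of distributions. Writing $G\subset L^{2}(\R^N)^N$ for the closed subspace of curl-free fields (equivalently the $L^{2}$ gradients $\na\phi$ with $\phi\in\dot H^{1}(\R^N)$), the condition $rot(\m_q)=0$ says precisely $\m_q\in G$, while $\Dv(\rho^{-1}\m_p)=0$ says $\int_{\R^N}\rho^{-1}\m_p\cd\na\psi\,dx=0$ for every test function $\psi$, i.e. $\m_p$ is orthogonal to $G$ for the weighted inner product $\langle g,h\rangle_{\rho}:=\int_{\R^N}\rho^{-1}g\cd h\,dx$.

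Because $\underline{\rho}\le\rho\le\|\rho\|_{L^{\infty}}$ almost everywhere, the weight $\rho^{-1}$ is bounded above and below, so $\langle\cdot,\cdot\rangle_{\rho}$ is an inner product equivalent to the standard one on $L^{2}(\R^N)^N$; in particular $(L^{2},\langle\cdot,\cdot\rangle_{\rho})$ is a Hilbert space and $G$ remains closed for it. I would then set $Q_{\rho}$ to be the $\langle\cdot,\cdot\rangle_{\rho}$-orthogonal projection onto $G$ and $P_{\rho}:=I-Q_{\rho}$. The orthogonal decomposition $\m=P_{\rho}\m+Q_{\rho}\m$ then produces $\m_q\in G$ and $\m_p\perp_{\rho}G$, which are exactly the two required constraints, and equivalence of norms makes $P_{\rho},Q_{\rho}$ bounded on $L^{2}$. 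Concretely, $Q_{\rho}\m=\na\phi$, where $\phi\in\dot H^{1}(\R^N)$ is the Lax--Milgram solution of the uniformly elliptic problem $\int\rho^{-1}\na\phi\cd\na\psi\,dx=\int\rho^{-1}\m\cd\na\psi\,dx$ for all $\psi$. Uniqueness is immediate: two admissible decompositions differ by an element lying in both $G$ and $G^{\perp_{\rho}}$, hence in $G\cap G^{\perp_{\rho}}=\{0\}$.

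The delicate part is the weak stability, where $\rho_{n}\to\rho$ only almost everywhere and $\m_{n}\rightharpoonup\m$ only weakly, so that $\rho_{n}^{-1}\na\phi_{n}$ and $\rho_{n}^{-1}\m_{n}$ are a priori products of merely weakly convergent sequences. I would first extract a uniform bound: setting $\na\phi_{n}:=Q_{\rho_{n}}\m_{n}$ and testing its defining equation against $\phi_{n}$, the bounds $\underline{\rho}\le\rho_{n}\le\bar{\rho}$ give $\|\na\phi_{n}\|_{L^{2}}\le C\|\m_{n}\|_{L^{2}}\le C$, so $Q_{\rho_{n}}\m_{n}$ and $P_{\rho_{n}}\m_{n}=\m_{n}-\na\phi_{n}$ are bounded in $L^{2}$. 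Along a subsequence $\na\phi_{n}\rightharpoonup g$, and since $G$ is weakly closed, $g=\na\phi\in G$. The observation that defuses the weak-times-weak difficulty is that $\rho_{n}^{-1}\to\rho^{-1}$ almost everywhere with a uniform bound, so for each fixed test field $\rho_{n}^{-1}\na\psi\to\rho^{-1}\na\psi$ strongly in $L^{2}$ by dominated convergence; pairing a weakly convergent sequence against a strongly convergent one then lets me pass to the limit in $\int\na\phi_{n}\cd(\rho_{n}^{-1}\na\psi)\,dx$ and $\int\m_{n}\cd(\rho_{n}^{-1}\na\psi)\,dx$, showing that the limit $\phi$ solves the weighted elliptic problem associated with $\rho$. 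By the uniqueness established above, $g=Q_{\rho}\m$ is independent of the subsequence, so the full sequence converges weakly and $P_{\rho_{n}}\m_{n}=\m_{n}-Q_{\rho_{n}}\m_{n}\rightharpoonup\m-Q_{\rho}\m=P_{\rho}\m$. I expect the main obstacle to be exactly this passage to the limit in the bilinear coefficient products, and the key is to move the almost-everywhere-convergent bounded weight onto the fixed smooth test function, thereby converting each weak$\times$weak pairing into a weak$\times$strong one.
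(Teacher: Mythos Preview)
The paper does not actually prove this lemma; it is stated as a quoted result with a reference to Lions' monograph \cite{L} (``the following elementary variant of Hodge--de Rham decomposition (see \cite{L})''), and no argument is given in the text. So there is no in-paper proof to compare against.

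That said, your argument is correct and is essentially the standard one that Lions gives. Reading the two constraints as $\Dv(\rho^{-1}\m_p)=0$ and $rot(\m_q)=0$, identifying $G$ with the closed subspace of $L^{2}$ gradients, and realizing the decomposition as the orthogonal splitting $G\oplus G^{\perp_{\rho}}$ for the equivalent inner product $\langle g,h\rangle_{\rho}=\int\rho^{-1}g\cdot h$ is exactly the intended construction; the Lax--Milgram description of $Q_{\rho}\m=\nabla\phi$ is the usual concrete realization. Your treatment of the stability part is also the right one: the uniform two-sided bound on $\rho_{n}$ gives the $L^{2}$ bound on $\nabla\phi_{n}$, weak closedness of $G$ keeps the limit a gradient, and moving the a.e.-convergent bounded weight $\rho_{n}^{-1}$ onto the fixed test gradient $\nabla\psi$ via dominated convergence converts each bilinear term into a weak$\times$strong pairing, after which uniqueness of the limit problem upgrades subsequential to full convergence. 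One small point worth making explicit is that the equivalence ``$rot(\m_q)=0$ in $L^{2}(\R^{N})$ $\Leftrightarrow$ $\m_q=\nabla\phi$ with $\phi\in\dot H^{1}$'' uses that the domain is all of $\R^{N}$; this is routine but should be stated since the rest of the paper works on a bounded $\Omega$.
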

We are ready to discuss the initial conditions for the approximation scheme \eqref{3.1}-\eqref{3.4}. Before imposing initial data,
we have to point out that the initial density may be vanish in a domain: an initial vacuum may exist, and then in this case we cannot directly impose initial data on the velocity $\u$.
To remove this difficulty, we adopt the idea   from \cite{L} to define
\[
   \hat{\rho_0} = \left\{
  \begin{array}{l l}
     \rho_0, & \quad \text{if $x$ is in $\O$}\\
     1, & \quad \text{if $x$ is in $\O^c$},\\
   \end{array} \right.
 \]
define
\begin{equation*}
\begin{split}
&(\rho_0)_{\varepsilon}=\hat{\rho_0}*\theta{\varepsilon}|_{\O},
\\&(\rho_0^{\frac{1}{2}})_{\varepsilon}=\hat{\rho_0}^{\frac{1}{2}}*\theta_{\varepsilon}|_{\O},
\\&(m_0\rho_0^{-\frac{1}{2}})_{\varepsilon}=\left(m_0\rho_0^{-\frac{1}{2}}1_{\{d>2\varepsilon\}}\right)*\theta_{\varepsilon},
\end{split}
\end{equation*}
where $d=\text{dist}(x,\partial\O).$

Now we   define
\begin{equation}\label{3.5}
\rho|_{t=0}=\rho_{0}^{\varepsilon}=(\rho_{0})_{\varepsilon}+\varepsilon,
\end{equation}
which implies
\begin{equation}\label{3.6}
\varepsilon\leq \rho_{0}^{\varepsilon} \leq C_{0},
\end{equation}
where $C_{0}$ is independent on $\varepsilon,$  and
$$(\rho_{0})_{\varepsilon}=\rho_{0}\ast\theta_{\varepsilon}.$$
 Clearly, $\rho_{0}^{\varepsilon}\in C^{\infty}(\O)$, and $$\rho_{0}^{\varepsilon}\to\rho_{0}\;\text { in } L^{p}(\O)\;\;\text { for all }\; 1\leq p< \infty.$$
We define $$\rho\u|_{t=0}=\m_{0}^{\varepsilon},$$
and  $$\bar{\m}_{0}^{\varepsilon}=(\m_{0}\rho_{0}^{-1/2})_{\varepsilon}(\rho_{0}^{1/2})_{\varepsilon} \in C^{\infty}_{0}(\O).$$
It is easy to see
$$\bar{\m}_{0}^{\varepsilon}\to \m_{0} \; \text{ in } L^{2}(\O),\quad\quad\bar{\m}_{0}^{\varepsilon}(\rho_{0}^{\varepsilon})^{-1/2}\to \m_{0}\rho_{0}^{-1/2}\;\text{ in } L^{2}(\O).$$
Relying on Lemma \ref{l1+}, we decompose $\bar{\m}_{0}^{\varepsilon}$ as
\begin{equation*}\begin{split}
&\bar{\m}_{0}^{\varepsilon}=\rho_{0}^{\varepsilon}\bar{\u}_{0}^{\varepsilon}+\nabla q_{0}^{\varepsilon},\;\text{ where }\;\bar{\u}_{0}^{\varepsilon},\; q_{0}^{\varepsilon} \in C^{\infty}(\O),
\\& \Dv\bar{\u}_{0}^{\varepsilon}=0 \;\;\text{ in } \O,
\end{split}
\end{equation*}
and then
\begin{equation*}
\Dv\left(\frac{1}{\rho_{0}^{\varepsilon}}(\nabla q_{0}^{\varepsilon}-\bar{\m}_{0}^{\varepsilon})\right)=0\quad\text{ in } \O.
\end{equation*}
Letting
\begin{equation}\label{3.1+}
\begin{split}
&\m_{0}^{\varepsilon}=\rho_{0}^{\varepsilon}\u_{0}^{\varepsilon}+\nabla q_{0}^{\varepsilon}, \; \text { where }\u_{0}^{\varepsilon} \in C^{\infty}_{0}(\O),
\\& \|\u_{0}^{\varepsilon}-\bar{\u}_{0}^{\varepsilon}\|\leq \varepsilon,\quad \Dv\u_{0}^{\varepsilon}=0\quad\quad\text { in } \O.
\end{split}
\end{equation}
We have
\begin{equation}
\label{3.2+}
\m_{0}^{\varepsilon}\to \m_{0} \;\; \text{ in } L^{2}(\O),\;\;\; \m_{0}^{\varepsilon}(\rho_{0}^{\varepsilon})^{-1/2}\to \m_{0}\rho_{0}^{-1/2} \;\; \text { in } L^2(\O).
\end{equation}
Thus $$\rho\u|_{t=0}=\m_{0}^{\varepsilon}=\bar{\m}_{0}^{\varepsilon}+\rho_{0}^{\varepsilon}(\u_{0}^{\varepsilon}-\bar{\u}_{0}^{\varepsilon}),$$
and we can impose the initial condition of $\u$ as
\begin{equation}\label{3.7}
\u|_{t=0}=\u_{0}^{\varepsilon}.
\end{equation}
Finally, we impose the initial condition for $f$ as
\begin{equation}\label{3.8}
f|_{t=0}=f_{0}.
\end{equation}

We now state and prove the following existence result.
\begin{Theorem}\label{T2}
With the above notations and assumptions, there exists a solution $(\rho,\u,f)$ of \eqref{3.1}-\eqref{3.4} with the initial conditions \eqref{3.5}, \eqref{3.7} and \eqref{3.8}, and the boundary conditions \eqref{boundary condition},  such that $\rho \in C^{\infty}(\O\times[0,\infty))$, $\u \in C^{\infty}(\O\times[0,\infty))$ and $f \in L^{\infty}(\O\times\R^3\times[0,\infty)).$
\end{Theorem}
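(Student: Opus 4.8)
To construct a solution of \eqref{3.1}--\eqref{3.4} I would run a Schauder fixed-point argument that, for \emph{fixed} $\varepsilon,\delta>0$, decouples the fluid block \eqref{3.1}--\eqref{3.3} from the kinetic equation \eqref{3.4}. The key observation is that $\rho$, $\u$, and the coefficients of the Vlasov equation all depend on $f$ only through the moments $m_0f$ and $m_1f$, so the fixed point may be set up on the pair $(a,b):=(m_0f,m_1f)$. Fixing $T>0$, let
\[
\K=\Big\{\,(a,b):\ a\ge0,\ \|a\|_{L^\infty(0,T;L^2(\O))}\le A_1,\ \|b\|_{L^\infty(0,T;L^{3/2}(\O))}\le A_2\,\Big\},
\]
a closed convex bounded subset of $X:=L^q(0,T;L^p(\O))\times L^q(0,T;L^p(\O))$ with $1\le q<\infty$ and $1<p<3/2$, the constants $A_1,A_2$ being the a priori bounds produced below. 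A fixed point of the solution operator $\Phi$ on $\K$ yields a solution on $[0,T]$; since the regularized system has smooth coefficients the solution is unique, and these local pieces patch to a solution on $[0,\infty)$.

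Given $(a,b)\in\K$, first solve the fluid block. Put $c:=R_\delta(a,b)\,a\ge0$ and $g:=R_\delta(a,b)\,b$; the regularization is designed so that $0\le c\le\delta^{-1}$ and $|g|\le\delta^{-1}$ pointwise, \emph{independently} of $(a,b)$. Equation \eqref{3.1} is a linear transport equation along the flow of the smooth divergence-free field $\u_\varepsilon$, so $\rho$ stays in $C^\infty$ and \eqref{3.6} propagates, $\varepsilon\le\rho\le C_0$. Using \eqref{3.1} to cancel terms, \eqref{3.2} becomes $\rho(\partial_t\u+\u_\varepsilon\cdot\nabla\u)-\mu\Delta\u+\nabla p+\rho c\,\u=\rho g$ with $\Dv\u=0$: a density-dependent incompressible Navier-Stokes system, uniformly parabolic because $\rho\ge\varepsilon$, with a zeroth-order damping $\rho c\,\u$ of favourable sign and a forcing $\rho g$ bounded by $C_0\delta^{-1}$. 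This is solved by the classical theory for density-dependent Navier-Stokes (cf. Lions \cite{L}) together with parabolic/Stokes regularity, and — using $\rho\ge\varepsilon$ and the smoothing in $x$ already built into $\u_\varepsilon$ — bootstrapped to $\rho,\u\in C^\infty$. Testing with $\u$ reproduces the Section~2 energy identity with the extra nonnegative term $2\int_\O\rho c|\u|^2\,dx$, so $\|\u\|_{L^2(0,T;H^1_0(\O))}\le C$ and, since $\rho\ge\varepsilon$, also $\|\u\|_{L^\infty(0,T;L^2(\O))}\le C$, both uniform over $(a,b)\in\K$.

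Next solve the kinetic equation with these $(\rho,\u)$ and the frozen factor $R_\delta(a,b)$, that is, the transport equation $\partial_t f+\v\cdot\nabla_x f+\Dv_\v\big(R_\delta(a,b)\,\rho(\u-\v)f\big)=0$ with data $f_0$ and specular reflection. Its drift is smooth in $x$ and affine in $\v$, with $\Dv_\v$-part equal to $-3R_\delta(a,b)\rho$ and hence bounded, so the characteristic flows are global (the reflection at $\partial\O$ being treated as in DiPerna-Lions \cite{DL}); this yields a unique $f\ge0$ with $\|f\|_{L^\infty}\le C\|f_0\|_{L^\infty}$ as in \eqref{2.6+}, while $M_3f\le C$ follows from Lemma~\ref{l2+} applied with $\w=R_\delta(a,b)\rho\u\in L^2(0,T;L^6(\O))$ (recall $\u\in L^2(0,T;H^1_0(\O))\hookrightarrow L^2(0,T;L^6(\O))$ and $R_\delta\rho$ is bounded, so the proof of Lemma~\ref{l2+} — which only uses that the drift is affine in $\v$ with bounded coefficients — carries over). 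Interpolating the $L^\infty$ bound against $M_3f$ then gives $m_0f\in L^\infty(0,T;L^2(\O))$ and $m_1f\in L^\infty(0,T;L^{3/2}(\O))$ with norms controlled by $A_1,A_2$, so that $\Phi(a,b):=(m_0f,m_1f)$ maps $\K$ into itself.

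It remains to close the loop. The map $\Phi$ is continuous on $\K$: $(a,b)\mapsto R_\delta(a,b)$ is bounded and Lipschitz, the density-dependent Navier-Stokes solution depends stably on $(c,g)$, the transport solution depends stably on its coefficients, and the moments pass to the limit under the uniform velocity-moment control. It is also compact: the averaging Lemma~\ref{l3+} applies here because $f$ is bounded in $L^\infty(0,T;L^2(\O\times\R^3))$, $f(1+|\v|^2)$ in $L^\infty(0,T;L^1(\O\times\R^3))$, and $\frac{R_\delta\rho(\u-\v)}{1+|\v|}$ in $L^\infty((0,T)\times\R^3;L^2(\O))$ — the last point using exactly the ($\varepsilon$-dependent) bound $\u\in L^\infty(0,T;L^2(\O))$ — so $m_0f$ and $m_1f$ are relatively compact in $L^q(0,T;L^p(\O))$ for $1\le q<\infty$ and $1\le p<2$, and thus $\overline{\Phi(\K)}$ is compact in $X$. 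Schauder's fixed-point theorem then produces $(a,b)=\Phi(a,b)$; taking $f$ to be the associated transport solution one has $m_0f=a$ and $m_1f=b$, so $(\rho,\u,f)$ solves \eqref{3.1}--\eqref{3.4} with the prescribed initial and boundary data on $[0,T]$, hence on $[0,\infty)$ by uniqueness and patching. I expect the main obstacle to be making the scheme simultaneously self-mapping and compact: one must verify that the bound $\u\in L^\infty(0,T;L^2(\O))$ — which is what makes the averaging lemma usable at this stage and which is only $\varepsilon$-uniform — holds uniformly over $\K$, that the argument of Lemma~\ref{l2+} genuinely tolerates the bounded factor $R_\delta\rho$ in the acceleration, and that the density-dependent Navier-Stokes block with the added damping is solvable and smooth; the merely $L^\infty$ regularity of $f_0$ is precisely what forces us to rely on the mollified transport $\u_\varepsilon$ and on the velocity-averaging lemma for all the compactness.
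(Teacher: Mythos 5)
Your argument is correct in outline, but it arranges the fixed point differently from the paper, and the comparison is worth making explicit. The paper runs Schauder on the \emph{fluid} pair: it defines the convex set $M$ of pairs $(\bar{\rho},\bar{\u})$, solves the transport equation for $\rho$ along the mollified field $\bar{\u}_{\varepsilon}$ by characteristics, then solves a \emph{linear} parabolic Stokes-type system \eqref{3.15} for $\u$ (existence, uniqueness and $L^2_tH^2_x$ regularity being the content of Lemma \ref{l1}), gets compactness of the map from Aubin--Lions, and only afterwards solves the linear Vlasov equation \eqref{3.20} with the smooth $(\rho,\u)$; the velocity-averaging Lemma \ref{l3+} is reserved for the later limits $\varepsilon\to0$ and $\delta\to0$. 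You instead iterate on the kinetic moments $(m_0f,m_1f)$, which forces you to solve the \emph{nonlinear} regularized density-dependent Navier--Stokes block at each step of the iteration --- precisely the part the paper proves by hand via its inner Schauder argument and Lemma \ref{l1}, and which you outsource to Lions \cite{L}; that citation is legitimate here (mollified convection, $\rho\ge\varepsilon$, bounded nonnegative damping $\rho c$ and bounded forcing $\rho g$ thanks to $R_\delta a\le\delta^{-1}$, $R_\delta|b|\le\delta^{-1}$), but it is the technical heart of the paper's proof, so in a self-contained write-up you would need to reproduce something like the paper's two-step linearization. What your arrangement buys is a cleaner separation: the set $\K$ is controlled by bounds ($A_1,A_2$ from Lemma \ref{l2+}) that are manifestly uniform over the iteration, and compactness comes from Lemma \ref{l3+} rather than from parabolic regularity; your observation that the averaging lemma is applicable at this stage only because $\rho\ge\varepsilon$ yields the ($\varepsilon$-dependent) bound $\u\in L^{\infty}(0,T;L^2(\O))$ is exactly right, as is your remark that Lemma \ref{l2+} must be re-proved for the drift $R_\delta\rho(\u-\v)$ rather than $\u-\v$ --- a point the paper itself glosses over when it invokes that lemma for \eqref{3.4}. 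The remaining soft spots in your sketch (continuity of $\Phi$, closedness of $\K$ in the $L^qL^p$ topology, patching in time) are routine and no worse than the corresponding steps in the paper.
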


\begin{Remark}
Our approximation scheme is   inspired by Lions' work on the density-dependent Navier-Stokes equations \cite{L} and Hamdache's work on the Vlasov-Stokes equations \cite{H}.
\end{Remark}

\begin{Remark}
If the initial data $f_0$ is smooth enough, we can show that the solutions are classical solutions. In fact, we can also show the uniqueness of such solutions.
\end{Remark}

\begin{proof}[Proof of Theorem \ref{T2}]
We define $M$ as the convex set in $$C([0,T]\times \O)\times L^{2}(0,T;H^{1}_0(\O))$$ by
\begin{equation*}\begin{split}
M=\Big\{&(\bar{\rho},\bar{\u})\in C([0,T]\times \O)\times L^{2}(0,T;H^{1}_0(\O)),
\\& \varepsilon\leq\bar{\rho}\leq C_{0} \text{ in } [0,T]\times \O,\,\,\Dv{\bar{\u}}=0\;\; \text{   almost everywhere on } (0,T)\times\O,\\& \|\bar{\u}\|_{L^{2}(0,T;H^{1}_0(\O))}\leq K \Big\},
\end{split}\end{equation*}
where $K>0$ is to be determined.
Here we define a map $T$ from $M$ into itself as
\begin{equation*}
T(\bar{\rho},\bar{\u})=(\rho,\u).
\end{equation*}

As a first step, we consider the following initial-value problem:
\begin{equation}\label{3.9}
\frac{\partial{\rho}}{\partial{t}}+\Dv(\bar{\u}_{\varepsilon}\rho)=0, \quad\quad\rho|_{t=0}=\rho_{0}^{\varepsilon}, \quad\quad
\end{equation}
in $(0,T)\times\O$,
where $\bar{\u}_{\varepsilon}=\bar{\u}\ast\theta_{\varepsilon}.$ The construction of $\bar{\u}_{\varepsilon}$ implies that
$\bar{\u}_{\varepsilon}\in L^{2}(0,T;C^{\infty}(\O)),$
and
$\Dv{\bar{\u}_{\varepsilon}}=0$ in $(0,T)\times \O.$
The solution of \eqref{3.9} can be written in terms of characteristics:
\begin{equation}\label{3.10}
\frac{dX}{ds}=\bar{\u}_{\varepsilon}(X,s),\quad\; X(x;x,t)=x,\quad x\in \O,\quad t\in [0,T].
\end{equation}
By the properties of $\bar{\u}_{\varepsilon}\in L^{2}(0,T;C^{\infty}(\O))$, and the basic theory of ordinary differential equations,
we know that there exists a unique solution $X$ of \eqref{3.10}.
Therefore, we have
\begin{equation*}
\rho(t,x)=\rho_{0}^{\varepsilon}(X(0;t,x)), \;\text { for all }t\in [0,T],\quad x\in \O.
\end{equation*}
It is clear that $\varepsilon\leq \rho\leq C_{0}$ in $[0,T]\times \O.$
Since $\bar{\u}_{\varepsilon}\in L^{2}(0,T;C^{\infty}(\O))$, then $\rho(t,x)$ lies in $C([0,T];C^{\infty}(\O)).$
By \eqref{3.9} and the properties of $\bar{\u}_{\varepsilon}$, we have $\frac{\partial{\rho}}{\partial{t}}\in L^{2}(0,T;C^{\infty}(\O)).$ Thus, $\rho\text{ and }\frac{\partial{\rho}}{\partial{t}}$ are bounded in these spaces uniformly in $(\bar{\rho},\bar{\u})\in M$. In particular, by the Aubin-Lions lemma, the set of $\rho$ built in this way is clearly compact in $C([0,T]\times \O).$

The second step is to build $\u$ by solving the following problem:
\begin{equation}\label{3.14}
\begin{split}
&\rho\frac{\partial{\u}}{\partial{t}}+\rho\bar{\u}_{\varepsilon}\nabla\u-\D\u+\nabla p+\rho\left( R_{\delta}\int_{\R^3}f\,d\v \right)\u=\rho \left(R_{\delta}\int_{\R^3}\v f\,d\v\right),
\\& \Dv\u=0,\quad\;\; \u|_{t=0}=\u_{0}^{\varepsilon},\quad\quad\Dv\u_0^{\varepsilon}=0,
\end{split}
\end{equation}
in$ (0,T)\times\O$.
%where $f$ is given by the following equation
%\begin{equation}
%\label{3.1++}
%f_t+\v\cdot\nabla_xf+\Dv_\v((\u-\v)R_\varepsilon\mu f)=0.
%\end{equation}
Let $$e=R_{\delta}\int_{\R^3}f\;d\v\geq 0,\quad g=R_{\delta}\int_{\R^3}\v f\;d\v. $$
Multiplying $\u$ on both sides of \eqref{3.14},
one obtains   the following energy  equality related to \eqref{3.14}:
\begin{equation*}
\partial_t\int_{\O}\frac{1}{2}\rho|\u|^2\;dx+\int_{\O}|\nabla\u|^2\;dx+\int_{\O}e\rho|\u|^2\;dx=\int_{\O}\rho g\u \;dx.
\end{equation*}
The right-hand side of above energy  equality is bounded by
\begin{equation*}
\begin{split}
&\int_{\O}\rho g \u\;dx\leq (\int_{\O}\rho|g|^2\;dx)^{\frac{1}{2}}\cdot(\int_{\O}\rho|\u|^2\;dx)^{\frac{1}{2}}
\\& \leq\|\rho_0\|_{L^{\infty}(\O)}^{\frac{1}{2}}\|g\|_{L^2(\O)}\|\sqrt{\rho}\u\|_{L^2(\O)}.
\end{split}
\end{equation*}
In conclusion, we obtain for all $t\in(0,T)$,
\begin{equation*}
\begin{split}
&\frac{1}{2}\int_{\O}\rho|\u|^2dx+\int_0^{t}\int_{\O}|\nabla\u|^2\;dxdt+\int_0^{t}\int_{\O}e\rho|\u|^2\;dxdt
\\&\leq C\int_0^t\int_{\O}|g|^2\,dxdt+C\int_0^t\int_{\O}\rho|\u|^2\,dxdt+\frac{1}{2}\int_{\O}\frac{|\m_0|^2}{\rho_0}\,dx.
\end{split}
\end{equation*}
Applying Gronwall inequality, we obtain
\begin{equation}
\begin{split}
\label{3.2++}
&\sup_{t\in(0,T)}\|\rho|\u|^2\|_{L^{1}(\O)}\leq C,
\\&\|\sqrt{e\rho}\u\|_{L^{2}(0,T;\O)}\leq C,
\\& \|\u\|_{L^{2}(0,T;H^1_0(\O))}\leq C,
\end{split}
\end{equation}
where $C$ denotes various constant which depend only on $T,\O,\varepsilon,\delta$ and bounds on $\|\rho_0\|_{L^{\infty}(\O)},$
$\|\rho_0|\u|^{2}\|_{L^{1}(\O)}.$
%since $0\leq R_{\varepsilon}(\bar{\u})\leq 1,$ we have $|R_{\varepsilon}(\bar{\u})|\leq C(\varepsilon)|\bar{\u}|+C(\varepsilon).$

%By $\bar{\u} \in  L^{2}(0,T;H^{1}(\O)$, we conclude $R_{\varepsilon}(\bar{\u})$ is bounded in $L^{2}(0,T;H^{1}(\O).$
%Letting $e=\mu R_{\varepsilon}\int_{\R^3}fd\v$, $g=\mu R_{\varepsilon}\int_{\R^3}\v fd\v,$
%we have
%\begin{equation*}
%|g|=|\frac{\mu\int_{\R^3}\v fd\v}{1+\varepsilon\int_{\R^3}fd\v+\varepsilon|\int_{\R^3}\v fd\v|}|\leq C(\varepsilon)\|\mu\|_{L^{\infty}},
%\end{equation*}
%and
%\begin{equation*}
%|e|=|\mu R_{\varepsilon}\int_{\R^3}fd\v|\leq C(\varepsilon)\|\mu\|_{L^{\infty}},
%\end{equation*}
%where $C(\varepsilon)$ is a constant depends on $\varepsilon.$
Rewriting \eqref{3.14} as follows
\begin{equation}
\begin{split}
\label{3.15}
& c\frac{\partial{\u}}{\partial t}+b\cdot\nabla\u-\D\u+\nabla p+a\u=h,
\\&\;\; \Dv\u=0, \quad\; \u|_{t=0}=\u_{0}^{\varepsilon},\quad\Dv\u_{0}^{\varepsilon}=0, \quad
\end{split}
\end{equation}
in $(0,T)\times \O,$ where
\begin{equation*}
\begin{split}
& c\in L^{\infty}((0,T)\times\O),\quad b\in L^{2}(0,T;L^{\infty}(\O)),\quad a\in L^{\infty}((0,T)\times\O),
\\ &h\in L^{\infty}((0,T)\times\O),\quad c\geq\delta >0.
\end{split}
\end{equation*}
To continue our proof, we need the following lemma:
\begin{Lemma}\label{l1}
There exists a unique solution $\u$ of \eqref{3.15} with the following regularity:
\begin{equation}
\label{3.16}
\u\in L^{2}(0,T;H^{2}(\O))\cap C([0,T];H^{1}_0(\O));\quad \nabla p, \frac{\partial {\u}}{\partial t} \in L^{2}((0,T)\times\O).
\end{equation}
\end{Lemma}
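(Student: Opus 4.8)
The plan is to treat \eqref{3.15} as a linear parabolic system for the divergence-free velocity $\u$ with bounded, time-dependent coefficients $c\geq\delta>0$, $b$, $a$, $h$, and to run the standard Galerkin scheme adapted to the Stokes operator. First I would fix an orthonormal basis $\{\psi_k\}_{k\ge1}$ of the space $H=\{\v\in H^1_0(\O):\Dv\v=0\}$ consisting of eigenfunctions of the Stokes operator (so that $\{\psi_k\}$ is also orthogonal in $L^2$ and in $H^2(\O)\cap H$), and look for approximate solutions $\u^m(t)=\sum_{k=1}^m d_k^m(t)\psi_k$ solving the projected ODE system obtained by testing \eqref{3.15} against each $\psi_k$. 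Because $c\geq\delta>0$ is bounded and measurable in $t$ and smooth in the sense that the mass matrix $(\int_\O c\,\psi_j\psi_k\,dx)_{j,k}$ is invertible with bound depending only on $\delta$, Carath\'eodory's existence theorem gives local-in-time solvability of the ODE system; the a priori bounds below make it global on $[0,T]$.

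The energy estimates come in two layers. The basic one reproduces \eqref{3.2++}: testing the Galerkin system with $\u^m$, using $c\geq\delta$, $a$ having a sign or at worst being bounded, and absorbing $\int\rho g\u$ by Cauchy--Schwarz and Gr\"onwall, yields $\u^m$ bounded in $L^\infty(0,T;L^2)\cap L^2(0,T;H^1_0)$ uniformly in $m$. For the higher regularity in \eqref{3.16} I would test the Galerkin system with $-\D\u^m$ (equivalently with the Stokes operator applied to $\u^m$, which is legitimate since $\psi_k$ are its eigenfunctions): the viscous term produces $\|\D\u^m\|_{L^2}^2$, the terms $c\,\partial_t\u^m$, $a\u^m$, $h$ are controlled in $L^2$, and the convection term $b\cdot\na\u^m$ is handled by $\|b\|_{L^\infty}\|\na\u^m\|_{L^2}\|\D\u^m\|_{L^2}$ together with Young's inequality, using $b\in L^2(0,T;L^\infty)$ to get a time-integrable coefficient. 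This gives $\u^m$ bounded in $L^2(0,T;H^2\cap H)\cap L^\infty(0,T;H^1_0)$; then reading $c\,\partial_t\u^m = \D\u^m - b\cdot\na\u^m - a\u^m + h + \na p^m$ and using $c\geq\delta$ bounds $\partial_t\u^m$ in $L^2((0,T)\times\O)$, and the pressure $\na p^m$ is recovered and bounded via the (time-dependent) Stokes/Helmholtz projection. I would then pass to the limit $m\to\infty$ by weak/weak-* compactness, with Aubin--Lions ($\u^m$ bounded in $L^2(0,T;H^2)$ with $\partial_t\u^m$ bounded in $L^2(0,T;L^2)$) providing strong $L^2(0,T;H^1_0)$ convergence, which is more than enough to pass to the limit in every term, including the only nonlinear-looking one $b\cdot\na\u^m$ where $b$ is a fixed given coefficient. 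The time-continuity $\u\in C([0,T];H^1_0)$ follows from $\u\in L^2(0,T;H^2)$ and $\partial_t\u\in L^2(0,T;L^2)$ by the Lions--Magenes interpolation lemma.

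Uniqueness is the easy part: the difference $\w$ of two solutions with the same data satisfies the same system with zero right-hand side and zero initial value, and testing with $\w$ gives $\tfrac12\tfrac{d}{dt}\int_\O c|\w|^2\,dx + \int_\O|\na\w|^2\,dx \le \int_\O|b||\na\w||\w|\,dx + \int_\O|a||\w|^2\,dx$; bounding the right side by $\|b\|_{L^\infty}\|\na\w\|_{L^2}\|\w\|_{L^2} + \|a\|_{L^\infty}\|\w\|_{L^2}^2$, absorbing the gradient term, and using $c\geq\delta$ plus Gr\"onwall forces $\w\equiv0$. The main obstacle, such as it is, is bookkeeping the pressure: one must be careful that $\na p$ is genuinely in $L^2((0,T)\times\O)$ and not merely a distribution, which is why the argument is organized around the Stokes operator (so that $\na p$ is defined a posteriori by $\na p = -c\,\partial_t\u + \D\u - b\cdot\na\u - a\u + h$, manifestly in $L^2$ once the right-hand side is) rather than trying to control $p$ directly; the low regularity of $b$ (only $L^2$ in time) also means every estimate must carry a time-integrable, not bounded, coefficient, so all Gr\"onwall applications are with $L^1$ kernels.
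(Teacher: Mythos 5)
Your overall architecture (Galerkin on the Stokes eigenbasis, two layers of energy estimates, pressure recovered a posteriori from the momentum equation, Aubin--Lions and Lions--Magenes for the limit and the time continuity, and the standard difference estimate for uniqueness) is sound and consistent with the paper's argument, which likewise reduces the lemma to an energy estimate plus stationary Stokes regularity and appeals to \cite{CF} for existence and uniqueness. However, your higher-regularity step contains a genuine circularity. You propose to test the Galerkin system with the Stokes operator applied to $\u^m$ and to control the resulting term $\int_\O c\,\partial_t\u^m\cdot A\u^m\,dx$ by declaring $c\,\partial_t\u^m$ ``controlled in $L^2$.'' At that stage you have no $L^2((0,T)\times\O)$ bound on $\partial_t\u^m$: the first-layer estimate only gives $\u^m$ bounded in $L^2(0,T;H^1_0(\O))$, so $\D\u^m$, and hence $\partial_t\u^m$, are controlled only in a negative Sobolev space. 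Since $c=c(t,x)$ is merely bounded and measurable --- neither constant nor differentiable in $t$ --- the pairing $\int_\O c\,\partial_t\u^m\cdot A\u^m\,dx$ is not an exact time derivative either, so it can be neither integrated nor absorbed. Your plan obtains $\partial_t\u^m\in L^2$ from the equation only \emph{after} the $H^2$ bound, which is precisely the bound you are trying to establish.

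The fix is to reverse the order, which is what the paper does: test with $\partial_t\u^m$ instead. Then the coercive term is $\int_\O c|\partial_t\u^m|^2\,dx\geq\delta\|\partial_t\u^m\|_{L^2(\O)}^2$ (the only place $c\geq\delta>0$ is used), the viscous term yields $\tfrac12\tfrac{d}{dt}\|\na\u^m\|_{L^2(\O)}^2$, the pressure drops out against the divergence-free test function, and the terms $b\cdot\na\u^m$, $a\u^m$, $h$ are absorbed into $\tfrac{\delta}{2}\|\partial_t\u^m\|_{L^2(\O)}^2$, leaving a Gronwall inequality with the $L^1(0,T)$ kernel $\|b\|_{L^\infty(\O)}^2$, exactly as in \eqref{3.17}. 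This produces $\u\in L^\infty(0,T;H^1_0(\O))$ and $\partial_t\u\in L^2((0,T)\times\O)$ simultaneously; only then does one move $c\,\partial_t\u+b\cdot\na\u+a\u-h$ to the right-hand side, observe that it lies in $L^2((0,T)\times\O)$, and invoke stationary Stokes regularity to get $\u\in L^2(0,T;H^2(\O))$ and $\na p\in L^2((0,T)\times\O)$ after normalizing $p$. With that reordering, the remainder of your proposal, including the uniqueness argument, goes through.
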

\begin{proof}
First, we multiply \eqref{3.15} by $\frac{\partial {\u}}{\partial t}$ and use integration by parts over $\O$ to obtain:
\begin{equation*}
\begin{split}
&\delta \int_{\O}\left|\frac{\partial {\u}}{\partial t}\right|^{2}dx+\frac{d}{dt}\int_{\O}\frac{1}{2}|\nabla\u|^2dx
\\ & \leq\int_{\O}\left(|h||\u_t|+|b||\nabla\u||\u_t|+|a||\u||\u_t|\right)dx
\end{split}
\end{equation*}
%Since the domain is periodic domain, Poincare inequality yields
%$$\|\u\|_{L^{2}(\O)}\leq C \|\u\|_{H^{1}(\O)}\leq C\|\nabla\u\|_{L^{2}(\O)}.$$
%improve from here
Using the Cauchy-Schwarz inequality and embedding inequality, one deduces that
\begin{equation}\label{3.17}
\begin{split}
&\frac{\delta}{2} \int_{\O}\left|\frac{\partial {\u}}{\partial t}\right|^{2}dx+\frac{d}{dt}\int_{\O}\frac{1}{2}|\nabla\u|^2dx
\\ & \leq C \left(1+\|b\|_{L^{\infty}(\O)}^2+\|a\|_{L^{\infty}(\O)}^2\lambda_0\right)\int_{\O}|\nabla\u|^2dx,
\end{split}
\end{equation}
where $\lambda_0$ is a constant from the Sobloev inequality.
By the regularity of $a,b$ and Gronwall's inequality, we deduce that
\begin{equation*}
\u \in L^{\infty}(0,T;H^{1}_{0}(\O)),\;\; \frac{\partial{\u}}{\partial t} \in L^{2}((0,T)\times\O).
\end{equation*}
We rewrite \eqref{3.15} as follows
\begin{equation}
\label{3.18}
\begin{split}
&-\D\u+\nabla p=h-c\u_t-b\cdot\nabla\u-a\u,
\\& \Dv\u=0,
\end{split}
\end{equation}
in $\O\times (0,T),$ and
$\u \in H^{1}_{0}(\O).$
Let $\widetilde{h}=h-c\u_t-b\cdot\nabla\u-a\u,$ and $$\widetilde{h}\in L^{2}(0,T;\O),$$
thus we have
\begin{equation*}
\begin{split}
&-\D\u+\nabla p=\widetilde{h},
\\&\Dv\u=0.
\end{split}
\end{equation*}
By the regularity of $\u$ and $\widetilde{h},$
we conclude that $p$ is bounded in $L^{2}((0,T);H^{-1}(\O)).$
We deduce that $p$ is bounded in $L^{2}((0,T)\times\O)$ if we normalize $p$ by imposing $$\int_{\O}p\;dx=0,\;\;\text{ almost everywhere } t\in (0,T).$$
To normalize $p$, we refer the readers to \cite{L,T} for more details.
Also we conclude that $\u$ is bounded in $L^{2}(0,T;H^{2}(\O))$ by the classical regularity on Stokes equation.
Thus, we proved the regularity of \eqref{3.16}.
The existence and uniqueness of \eqref{3.15} follows from the Lax-Milgram theorem, see for example \cite{CF}.
\end{proof}
By Lemma \ref{l1}, there exists a unique solution to \eqref{3.14} with the regularity of \eqref{3.16}. By the Aubin-Lions Lemma, $\u$ is compact in $L^{2}(0,T;H^{1}_0(\O)).$ This, with the help of compactness of $\rho$ in $M$, implies that the mapping $T$ is compact in $M$.

To find the fixed point of map $T$ by the Schauder theorem, it remains to find $K>0$ such that
\begin{equation*}
\|\u\|_{L^{2}(0,T;H^1_0(\O))}\leq K.
\end{equation*}
By \eqref{3.2++},
we have
\begin{equation*}
\|\u\|_{L^{2}(0,T;H^1_0(\O))}\leq K',
\end{equation*}
this $K'$ only depends on initial data. Thus, we can choose $K=K'+1.$

Following the same argument of the proof of Lemma \ref{l1}, we deduce that
$$\u \in L^{p}(0,T;W^{2,p}(\O)),\quad \frac{\partial{\u}}{\partial t} \in L^{p}((0,T)\times\O)$$ for all $1<p<\infty.$ With such regularity of $\u$, we can bootstrap and obtain more time regularity on $\u_{\varepsilon}$  and
then on $\rho$ and thus more regularity on $\u.$

In the third step, we would like to find the solutions to the following nonlinear Vlasov equation:
\begin{equation}
\begin{split}
\label{3.20}
&\frac{\partial f}{\partial t}+\v\nabla_{x} f+\Dv_{\v}(R_{\delta}(\u-\v)\rho f)=0,
\\ & f(0,x,\v)=f_{0}(x,\v),\;\;f(t,x,\v)=f(t,x,\v^*), \;\text{ for } x\in \partial\O,\; \v\cdot\nu(x)<0.
\end{split}
\end{equation}
where $\u, \rho$ are smooth functions obtained in step 2. The existence and uniqueness for the above Vlasov equation can be obtained as in \cite{BP,DL}.

Thus we have proved Theorem \ref{T2}.
\end{proof}

\begin{Remark}\label{r2}
The solutions $(\rho,\u,f)$ obtained in Theorem \ref{T2} satisfy the following energy inequality
\begin{equation}
\begin{split}
\label{3.21}
&\frac{d}{dt}\left(\int_{\O}\frac{1}{2}\rho|\u|^2\;dx+\int_{\O}\int_{\R^3}\frac{1}{2}f(1+|\v|^2)\;dxd\v\right)
\\&+\int_{\O}|\nabla\u|^2\;dx+\int_{\O}\int_{\R^3}R_{\delta}\rho f(\u-\v)^{2} \;dxd\v \leq 0.
\end{split}
\end{equation}
The energy inequality will be crucial in deriving a prior estimates on the solutions $(\rho,\u,f)$ of the approximate  system of equations.
\end{Remark}

\subsection{Pass to the limit as $\varepsilon\to 0$.}
 The objective of this section is to recover the original system from the approximation scheme \eqref{3.1}-\eqref{3.4} upon letting $\varepsilon$ goes to $0$. Here and below, we denote by $(\rho^{\varepsilon},\u^{\varepsilon},f^{\varepsilon})$ the solution constructed in Theorem \ref{T2}.

We take $\beta \in C(\O,\R^3)$, use \eqref{3.1} and \eqref{3.3} to find that
$\int_{\O}\beta(\rho^{\varepsilon})dx$ is independent of time $t$, that is,
\begin{equation}
\label{3.6+}
\int_{\O}\beta(\rho^{\varepsilon})\;dx=\int_{\O}\beta(\rho_{0}^{\varepsilon})\;dx \quad \text{ for all } t\in(0,\infty).
\end{equation}
Observing that $(\rho^{\varepsilon},\u^{\varepsilon},f^{\varepsilon})$ satisfies \eqref{3.21}, one obtains
\begin{equation*}
\begin{split}
&\frac{d}{dt}\left(\int_{\O}\frac{1}{2}\rho^{\varepsilon}|\u^{\varepsilon}|^2\;dx+\int_{\O}\int_{\R^3}\frac{1}{2}f^{\varepsilon}(1+|\v|^2)\;dxd\v\right)
\\&+\int_{\O}|\nabla\u^{\varepsilon}|^2\;dx+\int_{\O}\int_{\R^3}R_{\delta}\rho^{\varepsilon} f^{\varepsilon}(\u^{\varepsilon}-\v)^{2} \;dxd\v \leq 0.
\end{split}
\end{equation*}
Integrating it from $0$ to $t$, we have
\begin{equation}
\begin{split}
\label{3.22}
&\int_{\O}\frac{1}{2}\rho^{\varepsilon}|\u^{\varepsilon}|^2\;dx+\int_{\O}\int_{\R^3}\frac{1}{2}f^{\varepsilon}(1+|\v|^2)\;dxd\v
\\&+\int_{0}^{t}\int_{\O}|\nabla\u^{\varepsilon}|^2\;dxdt+\int_{0}^{t}\int_{\O}\int_{\R^3}R_{\delta}\rho^{\varepsilon} f^{\varepsilon}|\u^{\varepsilon}-\v|^{2} \;dxd\v dt
\\& \leq\frac{1}{2}\int_{\O}\rho_{0}^{\varepsilon}|\u_{0}^{\varepsilon}|^2\;dx+\frac{1}{2}\int_{\O}\int_{\R^3}(1+|\v|^2)f_{0} \;dx d\v
\end{split}
\end{equation}
for all $t>0.$
By \eqref{3.22}, one obtains the following estimates:
\begin{equation}
\begin{split}
\label{3.23}
&\|\u^{\varepsilon}\|_{L^{2}(0,T;H^{1}_{0}(\O))}\leq C,
\\& \sup_{0\leq t \leq T}\|\rho^{\varepsilon}|\u^{\varepsilon}|^2\|_{L^{1}(\O)}\leq C,
\end{split}
\end{equation}
%and
%\begin{equation}
%\label{3.24}
%\int_{\O}\int_{\R^3}|\v|^2f^{\varepsilon}d\v\leq C
%\quad\quad\quad \text{ for all } t\in (0,T),
%\end{equation}
where $C$ denotes a generic positive constant independent of
$\varepsilon$.

By \eqref{3.5} and \eqref{3.6}, we assume that, up to the extraction of subsequences,
\begin{equation}
\label{3.7+}
\rho^{\varepsilon}\to \rho \;\;\text{ in } C([0,T];L^{p}(\O)) \text{ for any }1\leq p<\infty.
\end{equation}
%{\color{red} which is bounded, satisfies \eqref{3.6+} with $\rho^{\varepsilon}$ and $\rho_{0}^{\varepsilon}$ replaced by $\rho$ and $\rho_{0},$ thus we can conclude Remark \ref{remarkofdensity}.}

We denote by $\u$   the weak limit of $\u^{\varepsilon}$ in $L^{2}(0,T;H^{1}_0(\O))$ due to \eqref{3.23}. By the compactness of the embedding $L^{p}(\O)\hookrightarrow W^{-1,2}_0(\O)$ for any $p>6/5,$ one deduces from \eqref{3.7+}:
\begin{equation}
\label{3.8+}
\rho^{\varepsilon} \to \rho \;\;\text{ in }  C([0,T];W^{-1,2}_0(\O)).
\end{equation}
This, together with \eqref{3.23}, yields
\begin{equation*}
\rho^{\varepsilon}\u^{\varepsilon}\to \rho\u \;\; \text{ in } \mathcal{D}'((0,T)\times\O).
\end{equation*}
Let a function $g\in C([0,T];L^{p}(\O))$ for any $1<p<\infty$ satisfy $g(0)=0$ on $\O$ and
\begin{equation*}
\frac{\partial g}{\partial t}+\Dv(g\u)=0 \;\;\text{ in } \mathcal{D}'((0,T)\times\O),
\end{equation*}
then $g\equiv0$, which implies the uniqueness of the density $\rho$ when $\u$ is fixed.
Thus we have proved that $\rho$ is the solution to \eqref{1}.

We now estimate $m_{0}f^{\varepsilon}$:
\begin{equation*}
\begin{split}
m_{0}f^{\varepsilon}&=\int_{\R^3}f^\varepsilon\;d\v=\int_{|\v|<r}f^{\varepsilon}\;d\v+\int_{|\v|\geq r}f^{\varepsilon}\;d\v
\\& \leq C\|f^\varepsilon\|_{L^{\infty}}r^3+\frac{1}{r^k}\int_{|\v|\geq r}|\v|^k f^{\varepsilon} \;d\v
\end{split}
\end{equation*}
for all $k\geq 0.$
Taking $r=(\int_{\R^3}|\v|^k f^\varepsilon d\v)^{\frac{1}{k+3}}$, we have
\begin{equation*}
m_{0}f^{\varepsilon}\leq C\|f^\varepsilon\|_{L^{\infty}}\left(\int_{\R^3}|\v|^k f^\varepsilon d\v\right)^{\frac{3}{k+3}}+\left(\int_{\R^3}|\v|^k f^\varepsilon d\v\right)^{\frac{3}{k+3}}.
\end{equation*}
Letting $k=3$, then
\begin{equation*}
\|m_{0}f^{\varepsilon}\|_{L^{2}(\O)}\leq C(\|f^\varepsilon\|_{L^{\infty}}+1)\left(\int_{\O}\int_{\R^3}|\v|^3 f^{\varepsilon}d\v\right)^{1/2}.
\end{equation*}
Thanks to Lemma \ref{l2+}, we conclude that $m_{3}f^{\varepsilon}$ is bounded in $L^{\infty}(0,T;L^{1}(\O))$.
This yields
\begin{equation}
\label{3.9+}
\|m_{0}f^{\varepsilon}\|_{L^{\infty}(0,T;L^{2}(\O))}\leq C.
\end{equation}
Following the same argument, one deduces that
\begin{equation}
\label{3.10+}
\|m_{1}f^{\varepsilon}\|_{L^{\infty}(0,T;L^{\frac{3}{2}}(\O))}\leq C.
\end{equation}
Using the fact $R_{\delta} \leq 1$, we see that
\begin{equation}
\label{3.11+}
\begin{split}
&\| \rho R_{\delta}m_{0}f^{\varepsilon}\u^{\varepsilon}\|_{L^{2}(0,T;L^{3/2}(\O))}
\\&\leq C\|\rho_0\|_{L^{\infty}((0,T)\times\O)}\|m_{0}f^{\varepsilon}\|_{L^{\infty}(0,T;L^{2}(\O))}\cdot\|\u^{\varepsilon}\|_{L^{2}(0,T;L^{6}(\O))},
\end{split}
\end{equation}
and
\begin{equation}
\label{3.12+}
\|\rho R_{\delta}m_{1}f^{\varepsilon}\|_{L^{\infty}(0,T;L^{3/2}(\O))}\leq C \|\rho_0\|_{L^{\infty}((0,T)\times\O)}\|m_{1}f^{\varepsilon}\|_{L^{\infty}(0,T;L^{\frac{3}{2}}(\O))}.
\end{equation}
Observing $$\rho^{\varepsilon} R_{\delta}\int_{\R^3}(\u^{\varepsilon}-\v) f^{\varepsilon}d\v=\rho^{\varepsilon} R_{\delta}m_{0}f^{\varepsilon}\u^{\varepsilon}-\rho R_{\delta}m_{1}f^{\varepsilon},$$
and using \eqref{3.11+} and \eqref{3.12+}, we obtain that
\begin{equation*}
\rho^{\varepsilon} R_{\delta}\int_{\R^3}(\u^{\varepsilon}-\v)f^{\varepsilon}\;d\v \quad\text{ is bounded in } L^{2}(0,T;L^{3/2}(\O)).
\end{equation*}
Since
\begin{equation*}
\frac{\partial(\rho^{\varepsilon}\u^{\varepsilon})}{\partial{t}}=-\Dv(\rho^{\varepsilon}\u_{\varepsilon}\otimes\u^{\varepsilon})
+\D\u^{\varepsilon}+\nabla p+\rho R_{\delta}\int_{\R^3}(\u^{\varepsilon}-\v)f^{\varepsilon}d\v,
\end{equation*}
and in particular, $\nabla\u^{\varepsilon}$ is  bounded in $L^{2}((0,T)\times\O)$ and
\begin{equation*}
\rho^{\varepsilon} R_{\delta}\int_{\R^3}(\u^{\varepsilon}-\v)f^{\varepsilon}d\v \;\;\text{ is bounded in } L^{2}(0,T;L^{3/2}(\O))
\end{equation*}
while $\rho^{\varepsilon}\u_{\varepsilon}\otimes\u^{\varepsilon}$ is bounded in $L^{2}(0,T;L^{\frac{3}{2}}(\O))$, one obtains that
\begin{equation*}
\frac{\partial(\rho^{\varepsilon}\u^{\varepsilon})}{\partial{t}} \;\; \text{ is bounded in } L^{2}(0,T;H^{-1}(\O)).
\end{equation*}
By Theorem 2.4 of \cite{L}, we obtain
\begin{equation*}\sqrt{\rho^{\varepsilon}}\u^{\varepsilon} \to \sqrt{\rho}\u \;\; \text{ in }L^{p}(0,T;L^{r}(\O))
 \end{equation*}
 for $2<p<\infty$ and $1\leq r<\frac{6p}{3p-4}$, and thus
 \begin{equation*}
 \rho^{\varepsilon}\u^{\varepsilon}\to\rho\u\quad \text{ in } L^{p}(0,T;L^{r}(\O))
 \end{equation*}
 for the same values of $p,r.$

Applying Lemma \ref{l3+} to \eqref{3.4}, we obtain
\begin{equation}
\label{3.13+}
m_{0}f^{\varepsilon}\to m_{0}f,\quad m_{1}f^{\varepsilon}\to m_{1}f\quad\text{ for almost everywhere }(t,x).
\end{equation}
By \eqref{3.9+} and \eqref{3.10+}, the relation \eqref{3.13+} can be strengthened to the following statement:
\begin{equation}
\label{3.14++}
\begin{split}
&m_{0}f^{\varepsilon}\to m_{0}f  \quad\text{ strongly in } L^{\infty}(0,T;L^{2}(\O)),
\\& m_{1}f^{\varepsilon}\to m_{1}f \quad\text{ strongly in } L^{\infty}(0,T;L^{3/2}(\O)).
\end{split}
\end{equation}

By \eqref{3.7+},
we have
\begin{equation}
\label{3.15+}
\rho^{\varepsilon}m_{0}f^{\varepsilon}\to \rho m_{0}f\;\;\text{ strongly in } L^{\infty}(0,T;L^{\frac{2p}{2+p}}(\O)),
\end{equation}
and
\begin{equation}
\label{3.16+}
\rho^{\varepsilon}m_{1}f^{\varepsilon}\to\rho m_{1}f \;\;\text{ strongly in } L^{\infty}(0,T;L^{\frac{3p}{2p+3}}(\O)).
\end{equation}
Thanks to \eqref{3.15+}-\eqref{3.16+} and the weak convergence of
$\u^{\varepsilon}$ in $L^{2}(0,T;H^{1}_0(\O))$, one has
\begin{equation}
\label{forceterm}
R_{\delta}\int_{\R^3}(\u^{\varepsilon}-\v)\rho^{\varepsilon}f^{\varepsilon}
\;d\v\to R_{\delta} \int_{\R^3}(\u-\v)\rho f\;d\v\;\;\text{ in the sense of
distributions}.
\end{equation}
%Thus we recovered the force term from the regularized term upon letting $\varepsilon$ goes to $0$.

The next step is to deal with the convergence of
$\Dv_{\v}(R_{\delta}\rho^{\varepsilon}(\u^{\varepsilon}-\v)f^{\varepsilon}).$
Let $\phi(\v)\in\mathcal {D}(\R^3)$   be a test function, we
want to show
\begin{equation}\label{3.31}
\begin{split}
&\lim_{\varepsilon\to 0}\left(\int_{\O}\int_{\R^3}(R_{\delta}\rho^{\varepsilon}(\u^{\varepsilon}-\v)f^{\varepsilon})\nabla_{\v}\phi\;d\v dx\right)=\\&\int_{\O}\int_{\R^3}(R_{\delta}\rho(\u-\v)f)\nabla_{\v}\phi\;d\v dx,
\end{split}
\end{equation}
which can be reached by \eqref{forceterm}.

We consider a test function $\varphi\in C^{1}_{0}([0,T]\times \O)$ such that $\Dv\varphi=0,$ and a test function $\phi \in C^1([0,T]\times\O\times\R^3)$ with compact support in $\v$, such that $\phi(T,\cdot,\cdot)=0$. The weak formulation associated with the approximation scheme \eqref{3.1}-\eqref{3.4} should be

\begin{equation}
\label{3.34}
\begin{split}
&-\int_{\O}\rho_{0}^{\varepsilon}\u_{0}^{\varepsilon}\cdot\varphi(0,x)\;dx+\int_{0}^{t}\int_{\O}\{-\rho^{\varepsilon}\u^{\varepsilon}\cdot\partial_{t}\varphi
-(\rho\u^{\varepsilon}\otimes\u^{\varepsilon})\cdot\nabla\varphi
\\&+\nabla\u^{\varepsilon}\cdot\nabla\varphi+\varphi\cdot R_{\delta}\int_{\R^3}(\u^{\varepsilon}-\v)\rho^{\varepsilon}f^{\varepsilon} d\v \}\;dxdt=0;
\end{split}
\end{equation}

and

\begin{equation}
\label{3.35}
\begin{split}
&-\int_{0}^{T}\int_{\O}\int_{\R^3}f^{\varepsilon}\left(\partial_t\phi+\v\cdot\nabla_{x}\phi+R_{\delta}(\u^{\varepsilon}-\v)\rho^{\varepsilon} \cdot\nabla_{\v}\phi\right)\;dxd\v ds
\\&=\int_{\O}\int_{\R^3}f_{0}\phi(0,\cdot,\cdot)\;dxd\v.
\end{split}
\end{equation}
By \eqref{3.1+}-\eqref{3.2+}, we have
\begin{equation*}
\int_{\O}\rho_{0}^{\varepsilon}\u_{0}^{\varepsilon}\cdot\varphi\; dx=\int_{\O}\m_{0}^{\varepsilon}\cdot\varphi dx\to \int_{\O}\m_{0}\cdot\varphi\; dx \quad\text{ as } \varepsilon\to 0,
\end{equation*}
for all test functions $\varphi.$

All the above convergence results in this subsection allow us to recover \eqref{2.1+}-\eqref{2.2+} by passing to the limits in \eqref{3.34} and \eqref{3.35} as $\varepsilon\to 0.$

From \eqref{3.22},   the solution $(\rho^{\varepsilon},\u^{\varepsilon},f^{\varepsilon})$ satisfies  the following:
\begin{equation*}
\begin{split}
&\int_{\O}\frac{1}{2}\rho^{\varepsilon}|\u^{\varepsilon}|^2\;dx+\int_{\O}\int_{\R^3}\frac{1}{2}f^{\varepsilon}(1+|\v|^2)\;dxd\v
\\&+\int_{0}^{t}\int_{\O}|\nabla\u^{\varepsilon}|^2\;dxdt+\int_{0}^{t}\int_{\O}\int_{\R^3}R_{\delta}\rho^{\varepsilon} f^{\varepsilon}|\u^{\varepsilon}-\v|^{2} \;dxd\v dt
\\& \leq\frac{1}{2}\int_{\O}\rho_{0}^{\varepsilon}|\u_{0}^{\varepsilon}|^2\;dx+\frac{1}{2}\int_{\O}\int_{\R^3}(1+|\v|^2)f_{0} \;dx d\v.
\end{split}
\end{equation*}
The difficulty of passing the limit for the energy inequality is the convergence of the term $\int_0^t\int_{\O\times\R^3}R_{\delta}\rho^{\varepsilon}f^{\varepsilon}|\u^{\varepsilon}-\v|^2\,d\v dxdt$.
We follow the same way as in \cite{H,YU1} to treat the term as follows
\begin{equation}
\label{energyinequality}
\begin{split}
&\int_0^T\int_{\O\times\R^3}R_{\delta}\rho^{\varepsilon}f^{\varepsilon}|\u^{\varepsilon}-\v|^2\,d\v\, dx\,dt
\\&\quad\quad\quad\quad\quad\quad=\int_0^T\int_{\O\times\R^3}\left(R_{\delta}\rho^{\varepsilon}f^{\varepsilon}|\u^{\varepsilon}|^2-2R_{\delta}\rho^{\varepsilon}f^{\varepsilon}\u^{\varepsilon}\v+R_{\delta}\rho^{\varepsilon}f^{\varepsilon}|\v|^2\right)\,dx d\v dt.
\end{split}
\end{equation}
By the embedding inequality, we have
\begin{equation}
\label{convergenceofcutu}
\u^{\varepsilon}\to\u\;\;\text{ weakly in } L^{2}(0,T;L^6(\O)).
\end{equation}
%From the following estimate
%\begin{equation}
%\begin{split}
%\label{energyinequalityconvergence}
%&\left|\int_0^T\int_{\O\times\R^3}R_{\delta}\rho^{\varepsilon}f^{\varepsilon}|\u^{\varepsilon}|^2\,d\v dx dt- %\int_0^T\int_{\O\times\R^3}R_{\delta}\rho f|\u|^2\,d\v dxdt\right|
%\\&\leq C\int_0^T\int_{\O}\left(\rho^{\varepsilon}-\rho\right)m_0f^{\varepsilon} %|\u^{\varepsilon}|^2\,dxdt+C\int_0^T\int_{\O}\rho(m_0f^{\varepsilon}-m_0f)|\u^{\varepsilon}|^2\,dx\,dt
%\\&+C\int_0^T\int_{\O}\rho m_0f\left(|\u^{\varepsilon}|^2-|\u|^2\right)\,dxdt.
%\end{split}
%\end{equation}
 By \eqref{3.15+}, \eqref{convergenceofcutu}, we deduce that
\begin{equation*}
R_{\delta}\rho^{\varepsilon}m_0f^{\varepsilon}|\u^{\varepsilon}|^2\to R_{\delta}\rho m_0f|\u|^2
\;\;\;\text{ weakly in } L^1(0,T;\O)
\end{equation*}
as $\varepsilon\to\infty.$
Similarly,
\begin{equation*}
 R_{\delta}\rho^{\varepsilon}m_1f^{\varepsilon}\u^{\varepsilon}\to
 R_{\delta}\rho m_1f\u\;\;\;\text{ weakly in } L^1(0,T;\O)
\end{equation*}
as $\varepsilon\to 0$

Finally, let us look at the terms:
\begin{equation*}
\begin{split}
&\left|\int_0^t\int_{\O}\int_{\R^3}R_{\delta}\rho^{\varepsilon}f^{\varepsilon}|\v|^2 \,d\v dxdt-\int_0^t\int_{\O}\int_{\R^3}R_{\delta}\rho f|\v|^2 \,d\v dxdt\right|\\
&\leq C\left|\rho^{\varepsilon}-\rho|\right|_{L^{\infty}}\int_0^T\int_{\O}m_2f^{\varepsilon}dxdt+C\|\rho\|_{L^{\infty}}\int_0^t\int_{\O}(m_2f-m_2f^{\varepsilon})dxdt
\\&=I_1+I_2.
\end{split}
\end{equation*}
It is clear to see that  $I_1\to 0$ as $\varepsilon\to0.$
For the term $I_2$,
because
 $$f^{\varepsilon}\rightharpoonup f\quad\quad\text{ weak star in } L^{\infty}(0,T;L^{p}(\O\times\R^3))$$
 for all $p\in(1,\infty]$ and $m_3f^\varepsilon$ is bounded in $L^{\infty}(0,T;L^1(\O))$, then for any fixed $r>0,$ we have
  \begin{equation*}
  \int_0^T\int_{\O\times\R^3}f^{\varepsilon}|\v|^2\,dx d\v dt=\int_0^T\int_{\O\times\R^3}\chi(|\v|<r)|\v|^2f^{\varepsilon}\,dx d\v dt+O(\frac{1}{r})
  \end{equation*}
  uniformly in $\varepsilon$ where $\chi$ is the characteristic function of the ball of $\R^3$ of radius $r$. Letting $\varepsilon\to 0,$ then $r\to\infty$, we find \begin{equation*}
  \int_0^T\int_{\O\times\R^3}f^\varepsilon|\v|^2\,dx d\v dt\to   \int_0^T\int_{\O\times\R^3}f|\v|^2\,dx d\v dt,
  \end{equation*}
  which means $I_2\to 0$ as $\varepsilon\to0.$
  Thus, we have proved
  \begin{equation}
  \label{energyinequalityconvergence2}
  \int_0^T\int_{\O\times\R^3}R_{\delta}\rho^{\varepsilon}f^{\varepsilon}|\u^{\varepsilon}-\v|^2\,d\v dxdt\to \int_0^T\int_{\O\times\R^3}R_{\delta}\rho f|\u-\v|^2\,d\v dx dt
  \end{equation}
  as $\varepsilon\to \infty.$

We observe that
\begin{equation}
\label{3.38}
\begin{split}
&\int_{\O}\rho_{0}^{\varepsilon}|\u_{0}^{\varepsilon}|^2dx=\int_{\O}\frac{1}{\rho_{0}^{\varepsilon}}|\m_{0}^{\varepsilon}-\nabla q_{0}^{\varepsilon}|^2\;dx
\\&=\int_{\O}\left(\frac{|\m_{0}^{\varepsilon}|^{2}}{\rho_{0}^{\varepsilon}}+\frac{|\nabla q_{0}^{\varepsilon}|^2}{\rho_{0}^{\varepsilon}}-\frac{2}{\rho_{0}^{\varepsilon}}(\rho_{0}^{\varepsilon}\u_{0}^{\varepsilon}+\nabla q_{0}^{\varepsilon})\cdot\nabla q_{0}^{\varepsilon}\right)\; dx
\\& =\int_{\O}\left(\frac{|\m_{0}^{\varepsilon}|^{2}}{\rho_{0}^{\varepsilon}}-2 \u_{0}^{\varepsilon}\cdot\nabla q_{0}^{\varepsilon}-\frac{|\nabla q_{0}^{\varepsilon}|^2}{\rho_{0}^{\varepsilon}}\right)\;dx,
\end{split}
\end{equation}
where we used Lemma \ref{l1+}.

Using $\Dv \u_{0}^{\varepsilon}=0,$ one obtains
\begin{equation}
\label{3.41}
\int_{\O}\rho_{0}^{\varepsilon}|\u_{0}^{\varepsilon}|^2\;dx+\int_{\O}\frac{|\nabla q_{0}^{\varepsilon}|^2}{\rho_{0}^{\varepsilon}}dx=\int_{\O}\frac{|\m_{0}^{\varepsilon}|^2}{\rho_{0}^{\varepsilon}}\;dx.
\end{equation}

Letting $\varepsilon\to0$, using \eqref{3.2+}, \eqref{3.22}, \eqref{energyinequalityconvergence2}, \eqref{3.41},  and the weak convergence of $(\rho^{\varepsilon},\u^{\varepsilon},f^{\varepsilon})$, we obtain
\begin{equation*}
\begin{split}
&\int_{\O}\frac{1}{2}\rho|\u|^2\;dx+\int_{\O}\int_{\R^3}\frac{1}{2}f(1+|\v|^2)\;dxd\v
\\&+\int_{0}^{t}\int_{\O}|\nabla\u|^2\;dxdt+\int_{0}^{t}\int_{\O}\int_{\R^3}R_{\delta}\rho f|\u-\v|^{2} \;dxd\v dt
\\& \leq\frac{1}{2}\int_{\O}\frac{|m_{0}|^2}{\rho_0}\;dx+\frac{1}{2}\int_{\O}\int_{\R^3}(1+|\v|^2)f_{0} \;dx d\v.
\end{split}
\end{equation*}

So far, we have proved the following result:
\begin{Proposition}
\label{P1}
For any $T>0$, there is a weak solution $(\rho^{\delta},\u^{\delta},f^{\delta})$ to the following system:
\begin{equation*}
\begin{split}
&\rho_{t}+\Dv(\rho\u)=0,
\\& \frac{\partial(\rho\u)}{\partial{t}}+\Dv(\rho\u\otimes\u)-\mu\D\u+\nabla p=\rho R_{\delta}\int_{\R^3}(\u-\v)f\;d\v,
\\& \Dv{\u}=0,
\\& \frac{\partial f}{\partial t}+\v\cdot\nabla f+\Dv_{\v}( R_{\delta}(\u-\v)\rho f)=0.
\end{split}
\end{equation*}
with the initial data $\u(0,x)=\u_0$ and $f(0,x,\v)=f_0(x,\v)$, and boundary conditions
\begin{equation*}
\begin{split}
&\u(t,x)=0\;\;\; \text{on}\;\;\partial \O,
\\&f(t,x,\v)=f(t,x,\v^{*})\;\;\text{ for }\; x\in \partial \O,\v\cdot\nu(x)<0.
\end{split}
\end{equation*}
In additional, the solution satisfies the following energy inequality:
\begin{equation*}
\begin{split}
&\int_{\O}\frac{1}{2}\rho|\u|^2\;dx+\int_{\O}\int_{\R^3}\frac{1}{2}f(1+|\v|^2)\;dxd\v
\\&+\int_{0}^{t}\int_{\O}|\nabla\u|^2\;dxdt+\int_{0}^{t}\int_{\O}\int_{\R^3}R_{\delta}\rho f|\u-\v|^{2} \;dxd\v dt
\\& \leq\frac{1}{2}\int_{\O}\frac{|m_{0}|^2}{\rho_0}\;dx+\frac{1}{2}\int_{\O}\int_{\R^3}(1+|\v|^2)f_{0} \;dx d\v.
\end{split}
\end{equation*}
\end{Proposition}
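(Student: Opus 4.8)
The plan is to take as given the family of smooth approximate solutions $(\rho^{\varepsilon},\u^{\varepsilon},f^{\varepsilon})$ produced by Theorem \ref{T2}, to extract uniform-in-$\varepsilon$ bounds from their energy inequality \eqref{3.21}, and to pass to the limit $\varepsilon\to 0$ while keeping $\delta$ fixed. First I would integrate \eqref{3.21} in time to obtain \eqref{3.22}, which immediately yields the $\varepsilon$-independent bounds \eqref{3.23}: $\u^{\varepsilon}$ bounded in $L^{2}(0,T;H^{1}_{0}(\O))$, $\rho^{\varepsilon}|\u^{\varepsilon}|^{2}$ bounded in $L^{\infty}(0,T;L^{1}(\O))$, and $f^{\varepsilon}(1+|\v|^{2})$ bounded in $L^{\infty}(0,T;L^{1}(\O\times\R^3))$. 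Feeding the velocity bound into Lemma \ref{l2+} with $k=3$ propagates the third moment, so $m_{3}f^{\varepsilon}$ is bounded in $L^{\infty}(0,T;L^{1}(\O))$; interpolating this moment bound against the $L^{\infty}$ bound on $f^{\varepsilon}$ then gives $m_{0}f^{\varepsilon}$ bounded in $L^{\infty}(0,T;L^{2}(\O))$ and $m_{1}f^{\varepsilon}$ bounded in $L^{\infty}(0,T;L^{3/2}(\O))$. Since $R_{\delta}\le 1$ and $\rho^{\varepsilon}$ is bounded by \eqref{2.1++}, all the coupling terms inherit uniform bounds.

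Next I would establish the compactness needed to pass to the limit in the nonlinear terms. The continuity equation together with $\Dv\u^{\varepsilon}=0$ forces $\int_{\O}\beta(\rho^{\varepsilon})\,dx$ to be conserved, and transport-compactness gives $\rho^{\varepsilon}\to\rho$ strongly in $C([0,T];L^{p}(\O))$ for all $1\le p<\infty$. For the momentum I would use the equation to bound $\partial_{t}(\rho^{\varepsilon}\u^{\varepsilon})$ in $L^{2}(0,T;H^{-1}(\O))$ — each term (convection $\rho^{\varepsilon}\u_{\varepsilon}\otimes\u^{\varepsilon}$, viscosity $\D\u^{\varepsilon}$, and the drag force $\rho^{\varepsilon}R_{\delta}\int(\u^{\varepsilon}-\v)f^{\varepsilon}\,d\v$) being controlled by the bounds just obtained — and then invoke Theorem 2.4 of \cite{L} to upgrade this to the strong convergence $\sqrt{\rho^{\varepsilon}}\u^{\varepsilon}\to\sqrt{\rho}\,\u$, hence $\rho^{\varepsilon}\u^{\varepsilon}\to\rho\u$ in $L^{p}(0,T;L^{r}(\O))$. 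For the kinetic side, the velocity-averaging Lemma \ref{l3+} applied to \eqref{3.4} delivers the almost-everywhere convergence of $m_{0}f^{\varepsilon}$ and $m_{1}f^{\varepsilon}$, which the uniform bounds upgrade to strong convergence in $L^{\infty}(0,T;L^{2})$ and $L^{\infty}(0,T;L^{3/2})$ respectively. Combined with the strong convergence of $\rho^{\varepsilon}$, this yields strong convergence of the products $\rho^{\varepsilon}m_{0}f^{\varepsilon}$ and $\rho^{\varepsilon}m_{1}f^{\varepsilon}$.

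With these convergences in hand I would pass to the limit in the weak formulations \eqref{3.34} and \eqref{3.35}. The convection term is handled by the strong convergence of $\rho^{\varepsilon}\u^{\varepsilon}$ paired with the weak $H^{1}$ convergence of $\u^{\varepsilon}$ (noting also $\u_{\varepsilon}\to\u$ since $\theta_{\varepsilon}$ is an approximate identity). The drag force, written as $\rho^{\varepsilon}R_{\delta}m_{0}f^{\varepsilon}\u^{\varepsilon}-\rho^{\varepsilon}R_{\delta}m_{1}f^{\varepsilon}$, converges in the sense of distributions because the first factor converges strongly while $\u^{\varepsilon}$ converges weakly; the Vlasov acceleration $R_{\delta}(\u^{\varepsilon}-\v)\rho^{\varepsilon}f^{\varepsilon}$ is treated by the same strong/weak pairing. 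The initial-data terms converge by \eqref{3.2+}. This recovers the $\delta$-regularized system together with its weak formulation.

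The step I expect to be the main obstacle is passing to the limit in the \emph{energy inequality}, specifically in the drag-dissipation term $\int_{0}^{t}\!\int_{\O\times\R^3}R_{\delta}\rho^{\varepsilon}f^{\varepsilon}|\u^{\varepsilon}-\v|^{2}$. The kinetic-energy and viscous terms pass by weak lower semicontinuity, but the dissipation must be handled by expanding $|\u^{\varepsilon}-\v|^{2}=|\u^{\varepsilon}|^{2}-2\u^{\varepsilon}\cdot\v+|\v|^{2}$: the first two pieces pass via strong convergence of $\rho^{\varepsilon}m_{k}f^{\varepsilon}$ against the weakly convergent $\u^{\varepsilon}\in L^{2}(0,T;L^{6}(\O))$, while the delicate $|\v|^{2}$ piece requires a tail-truncation argument — splitting $\int|\v|^{2}f^{\varepsilon}$ at $|\v|=r$, controlling the tail by the uniform $m_{3}$ bound as $O(1/r)$, and sending $\varepsilon\to 0$ then $r\to\infty$ — to obtain convergence of $m_{2}f^{\varepsilon}$. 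Finally, matching the limiting right-hand side to $\tfrac{1}{2}\int_{\O}|\m_{0}|^{2}/\rho_{0}\,dx$ uses the Hodge-type identity \eqref{3.41} arising from Lemma \ref{l1+}. The interplay between the merely weak convergence of $\u^{\varepsilon}$ and the uniform integrability in $\v$ needed for the moments is what makes this the crux of the argument.
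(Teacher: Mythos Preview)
Your proposal is correct and follows essentially the same route as the paper: uniform bounds from \eqref{3.22}, moment control via Lemma~\ref{l2+} and interpolation, strong compactness of $\rho^{\varepsilon}$ and of $\sqrt{\rho^{\varepsilon}}\u^{\varepsilon}$ via Theorem~2.4 of \cite{L}, velocity-averaging (Lemma~\ref{l3+}) for $m_{0}f^{\varepsilon},m_{1}f^{\varepsilon}$, and the expansion/tail-truncation argument for the drag-dissipation term in the energy inequality. Even the identification of the $|\v|^{2}$ piece as the crux and the use of \eqref{3.41} for the initial data match the paper's argument.
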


\subsection{Pass the limit as $\delta\to0$}
The last step of showing the global weak solution is
to pass the limit as $\delta$ goes to zero. First, we let $(\rho^\delta, f^\delta,\u^{\delta})$ be a solution constructed by
Proposition \ref{P1}. It is easy to find that all estimates for $(\rho^\varepsilon,f^\varepsilon,\u^\varepsilon)$ still hold for $(\rho^\delta,f^\delta,\u^\delta)$, thus we can treat these terms as before.

It only remains to show the convergence of the terms $$\int_{\R^3}R_{\delta}\rho^\delta f^\delta(\u^\delta-\v)\,d\v,\,\,\,\text{ and }\,\,\, \Dv(R_{\delta}\rho^{\delta}(\u^{\delta}-\v)).$$

The next step is to deal with the convergence of
$\Dv_{\v}(R_{\delta}(\u^{\delta}-\v)\rho^{\delta}f^{\delta}).$
Let $\phi(\v)\in\mathcal {D}(\R^3)$ to be a test function, we
want to show
\begin{equation}\label{3.31}
\begin{split}
&\lim_{\delta\to 0}\left(\int_{\O}
R_{\delta}\rho^{\delta}\u^{\delta}\left(\int_{\R^3}
f^{\delta}\nabla_{\v}\phi d\v\right)-\int_{\O}\int_{\R^3}
R_{\delta}\rho^{\delta}f^{\delta}\v\nabla_{\v}\phi\right)
\\&=\int_{\O}\rho\u\left(\int_{\R^3}
f\nabla_{\v}\phi d\v\right)\,dx-\int_{\O}\int_{\R^3} \rho f\v\nabla_{\v}\phi\;d\v\,dx.
\end{split}
\end{equation}
To prove \eqref{3.31}, we introduce a new function $Q_{\delta}=1-R_{\delta}$ (see \cite{H}), it is easy to see that $$Q_{\delta}\to 0\;\;\text{ as } \delta\to 0.$$  Writing
\begin{equation}\label{3.32}
\begin{split}
\int_{\O} R_{\delta}\rho^{\delta}\u^{\delta}\left(\int_{\R^3}
f^{\delta}\nabla_{\v}\phi \,d\v\right)dx=&\int_{\O}
\rho^{\delta}\u^{\delta}\left(\int_{\R^3}
f^{\delta}\nabla_{\v}\phi\, d\v\right)dx
\\& -\int_{\O}
Q_{\delta}\rho^{\delta}\u^{\delta}\left(\int_{\R^3}
f^{\delta}\nabla_{\v}\phi\, d\v\right)dx.
\end{split}
\end{equation}
On one hand, applying Lemma \ref{l3+} to \eqref{3.4}, we   see that
\begin{equation}\label{3.33}
\int_{\R^3} f^{\delta}\nabla_{\v}\phi d\v\to\int_{\R^3} f\nabla_{\v}\phi d\v \;\; \text{ almost everywhere } (t,x).
\end{equation}
It is easy to see
\begin{equation}
\left|\int_{\R^3} f^{\delta}\nabla_{\v}\phi d\v\right|\leq C|m_{0}f^{\delta}|.
\end{equation}
This, combined with \eqref{3.14++}, strengthens  \eqref{3.33} as follows:
\begin{equation}\label{3.14+}
\int_{\R^3} f^{\delta}\nabla_{\v}\phi d\v\to\int_{\R^3} f\nabla_{\v}\phi d\v \;\; \text{ strongly in  } L^{\infty}(0,T;L^{2}(\O)).
\end{equation}
By the convergence of $\rho^{\delta}$, \eqref{3.14+} and the weak convergence of $\u^{\delta}$ in $L^{2}(0,T;H^1_0(\O))$, one deduces
\begin{equation*}
\int_{\O}
\rho^{\delta}\u^{\delta}\left(\int_{\R^3}
f^{\delta}\nabla_{\v}\phi \,d\v\right)dx\to \int_{\O}
\rho\u\left(\int_{\R^3}
f\nabla_{\v}\phi \,d\v\right)dx.
\end{equation*}
On the other hand,
\begin{equation}
\begin{split}
&\left|\int_{\O} Q_{\delta}\rho^{\delta}\u^{\delta}\left(\int_{\R^3}
f^{\delta}\nabla_{\v}\phi \;d\v\right)dx\right|
\\& \leq C\int_{\O} Q_{\delta}m_{0}f|\u^{\delta}|\;dxdt
\\& \leq C \|m_{0}f^{\delta}\|_{L^{\infty}(0,T;L^{2}(\O))}\|\u^{\delta}\|_{L^{2}(0,T;L^{6}(\O))}\|Q_{\delta}\|_{L^{2}(0,T;L^{3}(\O))}
\\ & \leq C\|Q_{\delta}\|_{L^{2}(0,T;L^{3}(\O))},
\end{split}
\end{equation}
which yields
\begin{equation*}
\left|\int_{\O} Q_{\delta}\rho^{\delta}\u^{\delta}\left(\int_{\R^3}
f^{\delta}\nabla_{\v}\phi d\v\right)\right|\to 0\;\;\text { as } \delta\to 0,
\end{equation*}
where we used that $m_{0}f^{\delta}$ is bounded in$ L^{\infty}(0,T;L^{2}(\O))$ and $\u^{\delta}$ is bounded in
$L^{2}(0,T;L^{6}(\O))$, and $Q_{\delta}\to 0$ strongly in $L^{2}(0,T;L^{3}(\O)).$

 So we have proved the convergence of the first integral on the left of \eqref{3.31}. We can treat  similarly the convergence of the second integral of \eqref{3.31}. Thus, we finish the proof of \eqref{3.31}.

To complete the proof of Theorem \ref{T1}, it only remains to check that $(\rho,\u,f)$ satisfies the energy inequality \eqref{2.3+}.
In order to verify the energy inequality \eqref{2.3+}, we need to show
\begin{equation}
\label{3.40}
\int_{0}^{t}\int_{\O}\int_{\R^3}R_{\delta}\rho^{\delta} f^{\delta}|\u^{\delta}-\v|^{2}\;dx d\v dt\to \int_{0}^{t}\int_{\O}\int_{\R^3}\rho f|\u-\v|^{2}\;dx d\v dt
\end{equation}
as $\delta \to 0.$

Denote \begin{equation*}
\begin{split}
&E^{\delta}=\int_{\O}\int_{\R^3} \rho^{\delta}f^{\delta}|\u^{\delta}-\v|^{2}\; d\v dx,
\\ & E^{\delta}=E_{1}^{\delta}-2E_{2}^{\delta}+E_{3}^{\delta},
\end{split}
\end{equation*}
where \begin{equation*}
E_{1}^{\delta}=\int_{\O}\int_{\R^3}\rho^{\delta}f^{\delta}|\u^{\delta}|^2\;d\v dx= \int_{\O}\rho^{\delta}m_{0}f^{\delta}|\u^{\delta}|^2\;dx,
\end{equation*}
\begin{equation*}
E_{2}^{\delta}=\int_{\O}\int_{\R^3}\rho^{\delta}f^{\delta}\u^{\delta}\v \;d\v dx =\int_{\O}\rho^{\delta} m_{1}f^{\delta}\u^{\delta}\;dx,
\end{equation*}
and  \begin{equation*}
E_{3}^{\delta}=\int_{\O}\int_{\R^3}\rho^{\delta}f^{\delta}|\v|^2\;d\v dx= \int_{\O}\rho^{\delta}m_{2}f^{\delta}\;dx .
\end{equation*}
Write $R_{\delta}E^{\delta}=E^{\delta}-Q_{\delta}E^{\delta},$ we consider the convergence of $E^{\delta}$ first.

Since
\begin{equation*}
\begin{split}
&\left|\int_0^T\int_{\O\times\R^3}\rho^{\delta}f^{\delta}|\u^{\delta}|^2\,d\v dx dt- \int_0^T\int_{\O\times\R^3}\rho f|\u|^2\,d\v dxdt\right|
\\&\leq \int_0^T\int_{\O}\left(\rho^{\delta}-\rho\right)m_0f^{\delta} |\u^{\delta}|^2\,dxdt+\int_0^T\int_{\O}\rho(m_0f^{\delta}-m_0f)|\u^{\delta}|^2\,dx\,dt
\\&+\int_0^T\int_{\O}\rho m_0f\left(|\u^{\delta}|^2-|\u|^2\right)\,dxdt,
\end{split}
\end{equation*}
then
\begin{equation*}
\int_{0}^{t}E_{1}^{\delta}\;dt\to \int_{0}^{t}\int_{\O}\int_{\R^3}\rho f|\u|^{2}d\v dx dt\quad
\quad\text{ as } \delta \to 0
\end{equation*}
for all $t>0$.
Similarly, we obtain
\begin{equation*}
\int_{0}^{t}E_{2}^{\delta}\;dt\to \int_{0}^{t}\int_{\O}\int_{\R^3}\rho f \u \v \;d\v dx dt\quad
\quad\text{ as } \delta \to 0
\end{equation*}
for all $t>0$.

Finally, let us examine
\begin{equation*}
\begin{split}
&\left|\int_0^t\int_{\O}\int_{\R^3}\rho^{\delta}f^{\delta}|\v|^2 \,d\v dxdt-\int_0^t\int_{\O}\int_{\R^3}\rho f|\v|^2 \,d\v dxdt\right|\\
&\leq \|\rho^{\delta}-\rho\|_{L^{\infty}}\int_0^T\int_{\O}m_2f^{\delta}dxdt+C\|\rho\|_{L^{\infty}}\int_0^t\int_{\O}(m_2f-m_2f^{\delta})dxdt
\\&=I_1+I_2.
\end{split}
\end{equation*}
It is clear that $I_1\to 0$ as $\delta\to0.$
For the term $I_2$,
because
 $$f^{\delta}\rightharpoonup f\;\;\text{ weak star in } L^{\infty}(0,T;L^{p}(\O\times\R^3))$$
 for all $p\in(1,\infty]$ and $m_3f^\delta$ is bounded in $L^{\infty}(0,T;L^1(\O))$, then for any fixed $r>0,$ we have
  \begin{equation*}
  \int_0^T\int_{\O\times\R^3}f^{\delta}|\v|^2\,dx d\v dt=\int_0^T\int_{\O\times\R^3}\chi(|\v|<r)|\v|^2f^{\delta}\,dx d\v dt+O(\frac{1}{r})
  \end{equation*}
  uniformly in $\delta$ where $\chi$ is the characteristic function of the ball of $\R^3$ of radius $r$. Letting $\delta\to 0,$ then $r\to\infty$, we find \begin{equation*}
  \int_0^T\int_{\O\times\R^3}f^\delta|\v|^2\,dx d\v dt\to   \int_0^T\int_{\O\times\R^3}f|\v|^2\,dx d\v dt,
  \end{equation*}
  which means $I_2\to 0$ as $\delta\to0.$
Thus, we have proved
\begin{equation*}
\int_{0}^{t}E^{\delta}dt\to\int_0^{t}\int_{\O}\int_{\R^3}\rho f|\u-\v|^2\;d\v dx dt\quad\text{ as } \delta \to 0.
\end{equation*}

In order to show \eqref{3.40}, it remains to show that
\begin{equation}
\label{global1+}
 \int_0^t Q_{\delta}E^{\delta}dt\to 0 \quad\text{ as } \delta\to 0.
\end{equation}
By the H\"{o}lder inequality, we obtain
\begin{equation*}
\begin{split}
&\int_{0}^{t}\int_{\O}Q_{\delta}\rho^{\delta}m_{0}f^{\delta}|\u^{\delta}|^{2}\;dxdt
\\&\leq
C\|Q_{\delta}\|_{L^{2}(0,T;L^{6}(\O))}\|m_{0}f^{\delta}
\|_{L^{\infty}(0,T;L^{2}(\O))}\|\u^{\delta}\|_{L^{2}(0,T;L^{6}(\O))}.
\end{split}
\end{equation*}
This, together with the definition of $Q_\delta$, implies that
$$\int_{0}^{t}\int_{\O}Q_{\delta}\rho^{\delta}m_{0}f^{\delta}|\u^{\delta}|^{2}dxdt\to 0\;\;\text{ as } \delta \to 0$$ for all $t>0$. Following the same argument, it is easy to see
\begin{equation*}
\int_{0}^{t}\int_{\O}Q_{\delta}\rho^{\delta}m_{1}f^{\delta}\u^{\delta} dxdt\to 0 \;\;\text{ as } \delta \to 0.
\end{equation*}
We write
\begin{equation*}
\begin{split}
&\int_{0}^{t}\int_{\O}\int_{\R^3}Q_{\delta}\rho^{\delta}|\v|^{2} f^{\delta}d\v dxdt=\int_{0}^{t}\int_{\O}\int_{|\v|\leq r}Q_{\delta}\rho^{\delta}|\v|^2f^{\delta}d\v dxdt+Q_{\delta}\frac{C}{r},
\\&=\int_{0}^{t}\int_{\O}\int_{\R^3}\chi({|\v|<r})Q_{\delta}\rho^{\delta}|\v|^2f^{\delta}d\v dxdt+\frac{C}{r}Q_{\delta}
\end{split}
\end{equation*}
uniformly in $\delta$,
where $\chi(x)$ is a characterized function. We have
\begin{equation*}
\rho^{\delta}\to \rho \;\;\text{ in } C([0,T];L^{p}(\O)) \text{ for any }1\leq p<\infty ,
\end{equation*}
and by the definition of $Q^{\delta}$, we have$$ \chi({|\v|<r})Q_{\delta}\rho^{\delta}\to 0\;\;\text{ strongly in }L^{p}(0,T;L^{q}(\O))\;\;\text{ for any }1\leq p,q <\infty.$$
It follows
\begin{equation*}
\int_{0}^{t}\int_{\O}\int_{\R^3}Q_{\delta}\rho^{\delta}|\v|^{2} f^{\delta}d\v dxdt\to 0
\end{equation*}
when letting $\delta \to 0$ and $r\to \infty.$
Thus, we have proved that \eqref{global1+},
and hence have proved \eqref{3.40}.

%On the other hand, in order to verify the energy inequality \eqref{2.3+}, we need to show
%\begin{equation*}
%\int_{\O}\rho_0^{\varepsilon}|\u_0^{\varepsilon}|^2dx \leq \int_{\O}\frac{|\m_0|^2}{\rho_0}dx.
%\end{equation*}

Thanks to the convergence facts and the convexity of the energy inequality, we deduce \eqref{2.3+} from energy inequality in Proposition \ref{P1}.

The proof of Theorem \ref{T1} is complete.

\bigskip\bigskip

%%%%%%%%%%%%%%%%%%%%%

\section*{Acknowledgments}
D. Wang's research was supported in part by the National Science
Foundation under Grant DMS-0906160 and by the Office of Naval
Research under Grant N00014-07-1-0668. C. Yu's research was supported in part by the National Science
Foundation under Grant DMS-0906160.

\bigskip\bigskip

\end{document}